\newtheorem{theorem}{Theorem}[section]
\newtheorem{corollary}[theorem]{Corollary}
\newtheorem{lemma}[theorem]{Lemma}
\theoremstyle{definition}
\newtheorem{remark}[theorem]{Remark}
\date{}
\begin{document}

\author{Sergei Artamoshin
\thanks{The author wants to thank professor Adam Kor\'anyi, who pointed out that $\omega^{\alpha}(x,y)$ originally studied by the author, is an eigenfunction of the hyperbolic Laplacian. The author is also thankful to professor J\a'ozef Dodziuk for his interest to this paper and for his comments.}\\ \\Department of Mathematics, University of Massachusetts \\Dartmouth, USA\\ \\sartamoshin@gmail.com%
}

\title{\textbf{At Most Two Radii Theorem For A Real Eigenvalue Of The Hyperbolic Laplacian}}

\maketitle

\begin{abstract}
We study a $(k+1)$-dimensional hyperbolic space of a negative constant sectional curvature $\kappa=-1/\rho^2$. Let $\lambda$ be a real eigenvalue and $f_{\lambda} (x)$ be an eigenfunction of the hyperbolic Laplacian assuming a non-zero value at $x_0$. Then the average value of $f_{\lambda}(x)$ over any sphere centered at $x_0$ allows to identify the corresponding eigenvalue $\lambda$ uniquely as long as that average value is large enough. Otherwise, to identify the corresponding eigenvalue uniquely, we need to make sure that the computed average value is not zero and then we need to compute an additional average value of $f_{\lambda}(x)$ over a small enough sphere centered at the same point $x_0$.

\end{abstract}

\bigskip

\begin{spacing}{1.5}
{

\section{Introduction}

To start, let us clarify the difference between the statement of the Two-Radius theorem given in this paper and the statement of the famous Delsarte's Two-Radius Theorem for harmonic functions in $R^n$, see~\cite{Flatto} or~\cite{Delsarte}. Delsarte's theorem uses two radii at every point in $R^n$ to conclude that the given function is harmonic. The two radii theorem derived below, assumes that the given function is an eigenfunction and uses at most two radii just at a single point to compute the eigenvalue. Thus, it is an open question, if we can obtain a statement similar to Delsarte's theorem. Say, we used two radii at every point of the hyperbolic space and obtained the same eigenvalue. Does it mean that the given function is an eigenfunction? This question remains unanswered in this paper. It is interesting that in the case of a harmonic function in a hyperbolic space, only one radius at a single point is necessary to conclude that the eigenvalue must be zero.

All the derivations we shall see below, are based on the explicit representation of a radial eigenfunction. One of such possible representations is given by hypergeometric function, for example, see A.Kor\'anyi~\cite{Koranyi}, p. 111, formula~(3.10). For the computational reasons, we are going to use an alternative approach based on the geometric technique originally introduced by Hermann Schwarz in 1890, see~\cite{Schwarz} (pp. 359-361) or~\cite{Ahlfors} (pp. 168). He found that Poisson Kernel in a two dimensional space can be represented as a ratio of two segments forming one chord in a circle. A bit later, in 2005, Adam Kor\'anyi pointed out that the same ratio of two segments, being rased to any power, represents an eigenfunction of the hyperbolic Laplacian. Combination of these two ideas yields the technique that was already described in~\cite{Artamoshin} and applied to estimate the lower eigenvalue of the hyperbolic Laplacian. In this paper we shall see another application of the technique for analysis of radial eigenfunctions.

First, we may observe that by the uniqueness of Haar measure, the average value of every eigenfunction $f_\lambda(x)$ over a geodesic sphere $S^k(x_0,r)$ of radius $r$ centered at a point $x_0$ defined as

\begin{equation}
     \varphi_\lambda(r)=\frac{1}{|S^k(r)|}\int\limits_{S^k(x_0,r)}f_\lambda(x)dS_x
\end{equation}
is a radial eigenfunction with the same eigenvalue $\lambda$.

Second, using the proper geodesic hyperplane mirror we can isometrically reflect $x_0$ onto the origin of the ball model of a hyperbolic space. Such a transformation does not change eigenvalue, and therefore, allows us to reduce any radial eigenfunction to the radial eigenfunction centered at the origin.

Third, since the eigenvalue $\lambda$ is independent on a constant $k$ in the equation $\vartriangle (kf)+\lambda (kf)=0$, we may assume that the chosen eigenfunction with a non-zero value at the point of radialization $x_0$,  just equals to $1$ at $x_0$.

Therefore, uniting the above arguments, we can say that $x_0=O$, $f(O)=1$ and then the radial eigenfunction with the same eigenvalue $\lambda$ in the ball model can be obtained by the following Euclidean integral.

\begin{equation}
     \varphi_\lambda(r)=\frac{1}{|S^k(\eta)|}\int\limits_{S^k(\eta)}f_\lambda(m) dS_m,
\end{equation}
where $|m|=\eta=\rho\tanh(r/(2\rho))$ is the Euclidean radius of the geodesic sphere $S^k(r)$, and $S^k(\eta)$ is the Euclidean sphere of radius $\eta$. Thus investigation of averages can be reduced to the investigation of radial functions centered at the origin.

Finally, we shall see that every radial eigenfunction centered at the origin and assuming the value one at the origin, can be represented explicitly in terms of the geometric interpretation of Poisson kernel, originally introduced by Herman Schwarz.

Let $B^{k+1}_\rho$ be the ball model of a hyperbolic $(k+1)$-dimensional space with a negative constant sectional curvature $\kappa=-1/{\rho^2}$. Consider the set of all radial eigenfunctions of the Hyperbolic Laplacian assuming the value $1$ at the origin, i.e. the set of all solutions for the following system
\begin{equation}\label{System/DE/for/RE}
\left\{
  \begin{array}{ll}
     & \hbox{$\varphi^{''}(r)+\frac{k}{\rho}\coth\left(\frac{r}{\rho}\right)\varphi^{'}(r)+\lambda\varphi(r)=0$, $\lambda\in\mathbf{C}$;} \\
     & \hbox{$\varphi(0)=1$\,,}
  \end{array}
\right.
\end{equation}
written in the geodesic polar coordinates of the hyperbolic space.
Recall that for every $\lambda\in\mathbf{C}$ there exists a unique solution $\varphi_\lambda(r)$ such that $\varphi_\lambda(0)=1$, see \cite{Chavel} (p. 272).

In this paper we shall see that a real value of a radial eigenfunction at an arbitrary point $r_0$ allows us to restore the unique real eigenvalue $\lambda$ and therefore, the unique real eigenfunction $\varphi_\lambda(r)$ as long as
\begin{equation}\label{BoundaryFcn}
    \varphi_\lambda(r_0)\geq V(\eta(r_0))=\frac{1}{|S^k(\rho)|}\int\limits_{S^k(\rho)}
    \left( \frac{\rho^2-\eta^2}{|u-m|^2} \right)^{k/2} dS_u\,,
\end{equation}
where $S^k(\rho)$ is the $k$-dimensional sphere of radius $\rho$ centered at the origin and $m$ can be any point from $S^k(\eta)$ with $\eta=\eta(r_0)=\rho\tanh(r_0/(2\rho))$.

In this case we may conclude that $\lambda\leq-\kappa k^2/4$, $\varphi_\lambda(r)$ never vanishes, and $V(\eta(r_0))\rightarrow 0$ as $r_0\rightarrow\infty$. On the other hand, we shall see that according to Picard's Great Theorem, there exists infinitely many complex valued eigenvalues with complex valued eigenfunctions assuming the same value at the same point $r_0$ as $\varphi_\lambda(r)$ assumes at $r_0$.

If the inequality in \eqref{BoundaryFcn} fails, we still can restore the corresponding eigenvalue $\lambda$ uniquely, but need to make sure that $\varphi(r_0)\neq 0$ and use an additional value of $\varphi_\lambda(r_1)$ chosen at a point $r_1$ close enough to the origin. The upper bound for $r_1$ depends on the value of $\varphi_\lambda(r_0)$ and the dimension of a hyperbolic space.

\section{Preliminaries and Notations.}

We consider the Ball model of the $(k+1)$-dimensional Hyperbolic space of a negative constant sectional curvature $\kappa=-1/\rho^2$. In this model, the whole hyperbolic space is represented by the open ball $B^{k+1}(\rho)$ of radius $\rho$ and centered at the origin. Every point $m\in B^{k+1}(\rho)$ must be represented as $m=(X,T)=(X_1, X_2, ...X_k, T), |X|^2+T^2<\rho^2,$ in $R^{k+1}$, and the metric is given by $[2\rho^2/(\rho^2-|X|^2-T^2)]^2|ds|^2$. Let $\eta=|m|$ be the Euclidean distance between the origin and point $m$, while $r$ be the hyperbolic distance between the same points. For the notation a reader may refer to figure~\ref{BallSpacePortrait}.

\begin{figure}[h]
    \center\epsfig{figure=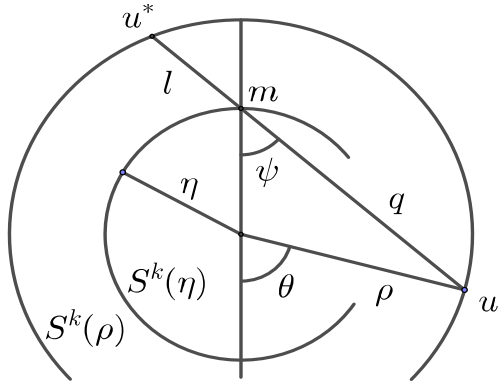, height=5cm, width=12cm}
  \caption{Euclidean variables in the hyperbolic disc.}
  \label{BallSpacePortrait}
\end{figure}
Recall that $S^k(\rho)$ is the $k$-dimensional sphere of radius $\rho$ centered at the origin,
    \begin{equation}\label{Eucli-Hype/Variables}
        \eta=\rho\tanh\left( \frac{r}{2\rho} \right)
        \quad\text{and}\quad
        r=\rho\ln\frac{\rho+\eta}{\rho-\eta},
    \end{equation}
and let $u$ be an arbitrary point from $S^k(\rho)$. Now we are ready for the list of statements that we are going to use as future references.

\begin{theorem}\label{Thm-General_Form/Of/RE/and/Identity} Every radial eigenfunction, complex or real valued, $\varphi_\lambda(r)$ assuming the value one at the origin, must be unique for a given eigenvalue $\lambda$, and can be represented as one of the following integrals,
    \begin{equation}\label{General_Form/Of/RE/and/Identity}
        \varphi_\lambda(r)
        =\frac{1}{|S^k(\rho)|}\int\limits_{S^k(\rho)}\omega^\alpha(m,u)dS_u
        =\frac{1}{|S^k(\rho)|}\int\limits_{S^k(\rho)}\omega^{k-\alpha}(m,u)dS_u,
    \end{equation}
where $\alpha$ is any of the two roots of the quadratic equation
    \begin{equation}\label{Alpha-Lambda-Equation/And/Omega}
        \lambda=-\kappa(\alpha k-\alpha^2),
        \quad\text{and}\quad
        \omega(m,u)=\frac{\rho^2-\eta^2}{|m-u|^2}.
    \end{equation}
\end{theorem}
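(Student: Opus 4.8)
The plan is to separate the claim into its two components: the uniqueness of the normalized radial eigenfunction attached to a given $\lambda$, and the two explicit integral representations. The uniqueness is essentially the existence--uniqueness statement already recalled from Chavel: the radial equation in \eqref{System/DE/for/RE} is a linear second-order ODE with a regular singular point at $r=0$, since $\coth(r/\rho)=\rho/r+O(r)$ forces the first-order coefficient to behave like $k/r$ there. The associated indicial equation is $s(s+k-1)=0$, with exponents $0$ and $1-k$, and only the exponent $0$ produces a solution finite at the origin; hence the radial eigenfunctions regular at $r=0$ form a one-dimensional space, and the condition $\varphi(0)=1$ selects exactly one of them. So it suffices to exhibit a radial eigenfunction with eigenvalue $\lambda$ and value $1$ at the origin that equals each of the two integrals.

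The geometric heart of the argument is the fact, attributed to Kor\'anyi in the introduction, that for a fixed boundary point $u\in S^k(\rho)$ the power $\omega^\alpha(m,u)$ of the Schwarz ratio $\omega(m,u)=(\rho^2-\eta^2)/|m-u|^2$ is, as a function of $m\in B^{k+1}_\rho$, an eigenfunction of the hyperbolic Laplacian. I would establish this and simultaneously pin down the eigenvalue by computing $\triangle\,\omega^\alpha$ directly in the ball model, where the hyperbolic Laplacian is the conformal rescaling of the Euclidean Laplacian determined by the factor $2\rho^2/(\rho^2-|m|^2)$. Using $|u|=\rho$ to simplify the derivatives of $\eta^2=|m|^2$ and of $|m-u|^2$, a careful but routine differentiation should yield $\triangle\,\omega^\alpha+\lambda\,\omega^\alpha=0$ with precisely $\lambda=-\kappa(\alpha k-\alpha^2)$. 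This computation is where I expect the main technical effort to lie.

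Granting the eigenfunction property, I would average $\omega^\alpha(m,u)$ over $u$ on the boundary sphere. Because the hyperbolic Laplacian commutes with isometries and every rotation of $R^{k+1}$ fixing the origin is such an isometry, integrating the eigenfunction against the rotation-invariant surface measure on $S^k(\rho)$ produces another eigenfunction with the same $\lambda$; moreover the result is invariant under all rotations about the origin and is therefore radial. To check the normalization I evaluate at $m=O$: there $\eta=0$ and $|O-u|=\rho$, so $\omega(O,u)=1$ for every $u$, whence $\frac{1}{|S^k(\rho)|}\int_{S^k(\rho)}\omega^\alpha(O,u)\,dS_u=1$. Thus the first integral in \eqref{General_Form/Of/RE/and/Identity} is a radial eigenfunction with eigenvalue $\lambda$ and value $1$ at the origin, and by the uniqueness established above it coincides with $\varphi_\lambda$.

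Finally, the equality of the two integrals follows from the structure of the defining quadratic. Writing \eqref{Alpha-Lambda-Equation/And/Omega} as $\alpha^2-k\alpha-\lambda/\kappa=0$, the two roots sum to $k$, so if $\alpha$ is one root then $k-\alpha$ is the other; both therefore give the \emph{same} eigenvalue $\lambda$. Applying the previous paragraph to the exponent $k-\alpha$ in place of $\alpha$ shows that $\frac{1}{|S^k(\rho)|}\int_{S^k(\rho)}\omega^{k-\alpha}(m,u)\,dS_u$ is likewise the unique normalized radial eigenfunction for $\lambda$, and hence equals both $\varphi_\lambda$ and the first integral. When $\lambda=-\kappa k^2/4$ the root is double, $\alpha=k-\alpha=k/2$, and the two representations coincide trivially.
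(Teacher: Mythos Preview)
Your proposal is correct. The paper itself does not give a self-contained argument: its proof consists entirely of the sentence that the theorem ``is obtained as the combination of theorem 3$\cdot$1$\cdot$1 and proposition 3$\cdot$1$\cdot$2 from~\cite{Artamoshin}.'' You, by contrast, sketch the full argument from first principles: the regular-singular-point analysis of \eqref{System/DE/for/RE} at $r=0$ (indicial exponents $0$ and $1-k$) to get uniqueness; the direct verification that $\omega^\alpha(\cdot,u)$ is a hyperbolic eigenfunction with eigenvalue $-\kappa(\alpha k-\alpha^2)$; averaging over $u\in S^k(\rho)$ to radialize while preserving the eigenvalue and the value $1$ at the origin; and Vieta's relation $\alpha+(k-\alpha)=k$ to identify the second integral with the first via uniqueness. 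This is exactly the standard route in the spherical-function literature, and it is what the cited reference presumably contains. The one place where you defer the actual work is the computation $\triangle\,\omega^\alpha=-\lambda\,\omega^\alpha$; this is indeed routine in the ball model (or, alternatively, immediate once one recognizes $\omega^\alpha$ as a horospherical wave $e^{-(\alpha/\rho)\langle\cdot,u\rangle}$ in horocyclic coordinates), but if you want the proof to be genuinely self-contained you should carry it out rather than assert it.
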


\begin{remark}\label{Rmk-Lambda/Real/Conditoin}
    An elementary computation shows that the quadratic equation
    in~\eqref{Alpha-Lambda-Equation/And/Omega} implies that an eigenvalue $\lambda$ is real if and only if $\alpha$ is real or $\alpha=k/2+ib$. Note that every real $\alpha$ generates a never vanishing radial eigenfunction with $\lambda\leq -\kappa k^2/4$. For $\alpha=k/2+ib$ with $b\neq0$, the corresponding $\lambda$ must be strictly greater than $-\kappa k^2/4$.
\end{remark}

\begin{proof}[Proof of theorem~\ref{Thm-General_Form/Of/RE/and/Identity}.] This theorem is obtained as the combination of theorem 3$\cdot$1$\cdot$1 and proposition 3$\cdot$1$\cdot$2 from~\cite{Artamoshin}.
\end{proof}

\begin{lemma}[Geometric Interpretation of $\omega$.]\label{Lemma-Geo-Interpre-Omega} $\quad$

    (A) Let $\eta<\rho$, and $m,u$ be two arbitrary points such that $m\in S^k(\eta)$, $u\in S^k(\rho)$. We define the point $u^*$ as the intersection of line $mu$ and sphere $S^k(\rho)$. Then the function $\omega=\omega(m,u)$ defined in \eqref{Alpha-Lambda-Equation/And/Omega} can be represented as the ratio of two segments, $\omega=l/q$, where $q=|m-u|$ and $l=|m-u^*|$. For the notation refer the Figure~\ref{BallSpacePortrait}.

    (B) If we define the angle $\psi=\angle umO$, then
    \begin{equation}\label{Deriva-of-Omega/and/(l+q)}
        d\psi=\frac{l+q}{4\eta\sin\psi}d\ln\omega
        \quad\text{and}\quad
        \frac{d(l+q)}{d\psi}=-\frac{2\eta^2\sin(2\psi)}{l+q}.
    \end{equation}
\end{lemma}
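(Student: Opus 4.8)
For part (A) the plan is to invoke the classical power-of-a-point (intersecting chords) relation. Since $\eta<\rho$, the point $m$ lies strictly inside $S^k(\rho)$, so the line through $m$ and $u$ meets the sphere in exactly two points $u$ and $u^*$ lying on opposite sides of $m$. I would compute the power of $m$ with respect to $S^k(\rho)$, namely $|Om|^2-\rho^2=\eta^2-\rho^2$, and equate it with the signed product along the chord, which for $m$ lying between $u$ and $u^*$ reads $-|m-u|\,|m-u^*|=-ql$. This yields $ql=\rho^2-\eta^2$, and dividing by $q^2$ gives $\omega=(\rho^2-\eta^2)/q^2=l/q$ at once. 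Equivalently, writing the two intersection points as $m+t\hat e$ for a unit direction $\hat e$ and reading off the product of the roots of $|m+t\hat e|^2=\rho^2$ produces the same identity without quoting the chord theorem.

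For part (B) the idea is to first obtain closed-form expressions for $q$ and $l$ as functions of $\psi$, after which both displayed identities reduce to one-variable differentiation. The law of cosines in triangle $Omu$ at the vertex $m$ (included angle $\psi$, with $|Om|=\eta$ and $|Ou|=\rho$) gives $q^2-2\eta\cos\psi\,q-(\rho^2-\eta^2)=0$; applying it in triangle $Omu^*$, where the angle at $m$ is $\pi-\psi$, gives $l^2+2\eta\cos\psi\,l-(\rho^2-\eta^2)=0$. Taking the positive roots, I obtain
\[ q=\eta\cos\psi+D,\qquad l=-\eta\cos\psi+D,\qquad D:=\sqrt{\rho^2-\eta^2\sin^2\psi}, \]
and hence the two clean identities $l+q=2D$ and $lq=\rho^2-\eta^2$ (the latter re-confirming part (A)).

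The second formula in (B) is then immediate: differentiating $l+q=2\sqrt{\rho^2-\eta^2\sin^2\psi}$ and then substituting $\sqrt{\rho^2-\eta^2\sin^2\psi}=(l+q)/2$ and $2\sin\psi\cos\psi=\sin 2\psi$ produces $d(l+q)/d\psi=-2\eta^2\sin(2\psi)/(l+q)$. For the first formula I would expand $d\ln\omega=(l'/l-q'/q)\,d\psi=\big((l'q-q'l)/lq\big)\,d\psi$ using $q'=-\eta\sin\psi+D'$, $l'=\eta\sin\psi+D'$ and $D'=-\eta^2\sin\psi\cos\psi/D$. The antisymmetric combination $l'q-q'l$ collapses to $2\eta\sin\psi\,(D^2-\eta^2\cos^2\psi)/D=2\eta\sin\psi\,(\rho^2-\eta^2)/D$; dividing by $lq=\rho^2-\eta^2$ leaves $4\eta\sin\psi/(l+q)$, so that $d\ln\omega=\tfrac{4\eta\sin\psi}{l+q}\,d\psi$, and inverting gives exactly $d\psi=\tfrac{l+q}{4\eta\sin\psi}\,d\ln\omega$.

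I do not anticipate a serious obstacle: the content is elementary plane geometry together with a one-variable differentiation. The only places demanding care are, first, correctly locating $u^*$ on the ray opposite to $u$ so that the angle at $m$ becomes $\pi-\psi$ and the middle sign in the quadratic for $l$ flips, and second, the simplification of $l'q-q'l$, where the $\eta^2\cos^2\psi$ term must recombine with $D^2$ into the constant $\rho^2-\eta^2$. It is precisely this collapse that makes the factors $l+q$ and $\sin\psi$ land in the stated positions.
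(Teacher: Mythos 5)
Your proof is correct, and it is genuinely more self-contained than the paper's. The paper disposes of part (A) and the first identity in (B) by citing Theorem 2.1.1 and Lemma 2.4.1 of the author's earlier work, and proves only the second identity of (B) directly, via the chord-length relation obtained from the Pythagorean theorem followed by differentiation. You instead prove everything from scratch: power-of-a-point gives $lq=\rho^2-\eta^2$ and hence $\omega=(\rho^2-\eta^2)/q^2=l/q$ for (A), and the law of cosines gives the closed forms $q=\eta\cos\psi+D$, $l=-\eta\cos\psi+D$ with $D=\sqrt{\rho^2-\eta^2\sin^2\psi}$, from which both identities in (B) follow by one-variable differentiation. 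What your route buys is independence from the external reference plus an explicit verification of the first identity (the collapse $l'q-q'l=2\eta\sin\psi(\rho^2-\eta^2)/D$ divided by $lq=\rho^2-\eta^2$), which the paper never shows. One further point in your favor: your relation $l+q=2D$, i.e.\ $(l+q)^2/4=\rho^2-\eta^2\sin^2\psi$, is the correct normalization; the paper's printed relation $(l+q)^2/2=\rho^2-\eta^2\sin^2\psi$ is evidently a typo, since only the factor $1/4$ reproduces the stated derivative $d(l+q)/d\psi=-2\eta^2\sin(2\psi)/(l+q)$. Your computation silently corrects this.
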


\begin{remark}
    The ratio of two segments $l/q$ was originally introduced by Hermann Schwarz and called as a geometric interpretation of $\omega$, see~\cite{Schwarz} (pp. 359-361) or~\cite{Ahlfors} (p. 168).
\end{remark}

\begin{proof}[Proof of lemma~\ref{Lemma-Geo-Interpre-Omega}.]
    Statement (A) of the lemma together with the first identity in\eqref{Deriva-of-Omega/and/(l+q)} can be obtained as the combination of theorem~2$\cdot$1$\cdot$1 and lemma~2$\cdot$4$\cdot$1 from~\cite{Artamoshin}. To obtain the second formula in~\eqref{Deriva-of-Omega/and/(l+q)}, we apply Pythagorean theorem and observe that $(l+q)^2/2=\rho^2-\eta^2\sin^2\psi$.
    Differentiation of the last formula with respect to $\psi$ completes the proof.
\end{proof}

\begin{theorem}[A vanishing radial eigenfunction]\label{Thm-Vanishing-RE-Final}
       Let $\lambda$ be real and $\varphi_\lambda(r)$ is a radial eigenfunction assuming value $1$ at the origin. Then $\varphi_\lambda(r)=0$ at some $r<\infty$ if and only if $\lambda>-\kappa k^2/4$ or equivalently,
       \begin{equation}\label{Lambda-Alpha-k-Thm}
            \lambda=\frac{\alpha k-\alpha^2}{\rho^2}
            \quad\text{with}\quad
            \alpha=\frac{k}{2}+ib
            \quad\text{and}\quad
            b=\sqrt{\frac{\lambda}{-\kappa}-\frac{k^2}{4}}>0.
       \end{equation}
In this case the eigenfunction can be written as
    \begin{equation}\label{Vanishing-RE-General}
        \varphi_\lambda(r(\eta))
        =\frac{1}{|S^k(\rho)|}\int\limits_{S^k(\rho)}\omega^\alpha dS_y
        =\frac{1}{|S^k(\rho)|}\int\limits_{S^k(\rho)}\omega^{k/2}\cos(b\ln\omega) dS_y\,,
    \end{equation}
or equivalently,
    \begin{equation}\label{Vanishing-RE-Final}
        \varphi_\lambda(r(\eta))
        =\frac{4\rho(\rho^2-\eta^2)^{k/2}\sigma_{k-1}}{|S^k(\rho)|}\int\limits_0^{\pi/2}
        \frac{\sin^{k-1}\psi}{l+q}\cos\left(b\ln\frac{l}{q}\right)d\psi\,.
    \end{equation}
\end{theorem}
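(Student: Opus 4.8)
The plan is to dispatch the two representation formulas first, since they feed directly into the oscillation argument, and then to treat the ``if and only if'' claim. For \eqref{Vanishing-RE-General} I would start from Theorem~\ref{Thm-General_Form/Of/RE/and/Identity}. When $\lambda>-\kappa k^2/4$ the two roots of \eqref{Alpha-Lambda-Equation/And/Omega} are $\alpha=k/2+ib$ and $k-\alpha=k/2-ib=\overline{\alpha}$, so the two integral representations in \eqref{General_Form/Of/RE/and/Identity} are complex conjugates of one another, because $\omega>0$ forces $\omega^{\,k-\alpha}=\overline{\omega^{\alpha}}$. Averaging them, equivalently taking real parts, and writing $\omega^{k/2+ib}=\omega^{k/2}e^{ib\ln\omega}$ so that $\operatorname{Re}\omega^{\alpha}=\omega^{k/2}\cos(b\ln\omega)$, yields \eqref{Vanishing-RE-General} immediately.

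Next I would reduce the surface integral in \eqref{Vanishing-RE-General} to the one-dimensional integral \eqref{Vanishing-RE-Final}. Since the integrand depends on $u$ only through its angle to the axis $Om$, rotational symmetry about that axis gives $dS_u=\rho^{k}\sin^{k-1}\theta\,d\theta\,d\sigma$, where $\theta$ is the polar angle at the origin and the azimuthal directions contribute the factor $\sigma_{k-1}$. I would then pass from $\theta$ to the Schwarz angle $\psi$ of Lemma~\ref{Lemma-Geo-Interpre-Omega}: the triangle $Omu$ gives $\sin\theta=(q/\rho)\sin\psi$, and differentiating $\theta=\pi-\psi-\arcsin((\eta/\rho)\sin\psi)$ yields the Jacobian $|d\theta/d\psi|=2q/(l+q)$. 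Combining these with $\omega^{k/2}=(\rho^2-\eta^2)^{k/2}/q^{k}$ makes every power of $q$ cancel, leaving the integrand $2(\rho^2-\eta^2)^{k/2}\rho^{1-k}\sin^{k-1}\psi\,(l+q)^{-1}\cos(b\ln\omega)$ on $[0,\pi]$. Finally the map $\psi\mapsto\pi-\psi$ fixes $\sin\psi$ and $l+q$ while sending $\omega\mapsto1/\omega$, under which $\cos(b\ln\omega)$ is even; the integrand is therefore symmetric about $\psi=\pi/2$, and folding $[0,\pi]$ onto $[0,\pi/2]$ produces the extra factor $2$ and exactly the constant appearing in \eqref{Vanishing-RE-Final}. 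This change of variables is the main computation, but it is routine once the Jacobian is in hand.

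For the equivalence, one direction is immediate. If $\lambda\le-\kappa k^2/4$, then by Remark~\ref{Rmk-Lambda/Real/Conditoin} the exponent $\alpha$ in \eqref{General_Form/Of/RE/and/Identity} is real, so $\omega^{\alpha}>0$ pointwise and the average is strictly positive; hence $\varphi_\lambda(r)>0$ for every $r$ and no finite zero exists. This proves the contrapositive of the ``only if'' statement.

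The ``if'' direction --- where I expect the real obstacle --- is to show that $\lambda>-\kappa k^2/4$, i.e. $b>0$, forces a finite zero. Here I would exploit \eqref{Vanishing-RE-Final} together with the first identity of \eqref{Deriva-of-Omega/and/(l+q)}, which converts $\frac{\sin^{k-1}\psi}{l+q}\,d\psi$ into $\frac{\sin^{k-2}\psi}{4\eta}\,d\ln\omega$. As $\psi:0\to\pi/2$ the quantity $\omega$ runs from $(\rho-\eta)/(\rho+\eta)$ to $1$, so the substitution $t=\ln\omega$ has range exactly $[-r/\rho,0]$ by \eqref{Eucli-Hype/Variables}, and using that $\cos(bt)$ is even one gets
\[
   \varphi_\lambda(r)=\frac{\rho(\rho^2-\eta^2)^{k/2}\sigma_{k-1}}{\eta\,|S^k(\rho)|}\int_0^{r/\rho}\sin^{k-2}\psi(t)\,\cos(bt)\,dt ,
\]
with a strictly positive prefactor and a positive monotone weight $\sin^{k-2}\psi(t)$. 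Thus $\varphi_\lambda$ is a positive multiple of a weighted cosine integral whose number of oscillations $br/(2\pi\rho)$ tends to infinity with $r$. The crux is to turn this oscillation into a genuine sign change: since $\varphi_\lambda(0)=1>0$, it suffices to exhibit one radius where the integral is negative and then invoke the intermediate value theorem. I would make this rigorous by estimating the partial integrals over successive half periods of $\cos(bt)$ against the monotone weight via a second mean value argument. As a robust alternative that settles the sign question cleanly for all $k$, I would recast \eqref{System/DE/for/RE} in self-adjoint form $(\sinh^{k}(r/\rho)\varphi')'+\lambda\sinh^{k}(r/\rho)\varphi=0$ and apply Sturm oscillation theory: as $r\to\infty$ the coefficient $\frac{k}{\rho}\coth(r/\rho)\to\frac{k}{\rho}$, and the limiting constant-coefficient equation $v''+\frac{k}{\rho}v'+\lambda v=0$ has complex characteristic roots precisely when $\lambda>k^2/(4\rho^2)=-\kappa k^2/4$. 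Comparison then forces infinitely many zeros of $\varphi_\lambda$, in particular a finite one, which is exactly the content of \eqref{Lambda-Alpha-k-Thm}.
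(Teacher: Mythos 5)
Your proposal follows essentially the same route as the paper: the decisive direction --- that $\lambda>-\kappa k^2/4$ forces a finite zero --- is settled by Sturm comparison/oscillation, exactly as in the paper's proof, while the integral representations \eqref{Vanishing-RE-General}--\eqref{Vanishing-RE-Final} and the positivity of the average of $\omega^\alpha$ for real $\alpha$, which the paper simply imports from \cite{Artamoshin}, you derive directly and correctly (your Jacobian $|d\theta/d\psi|=2q/(l+q)$ and the folding about $\psi=\pi/2$ both check out). The one step to tighten is the comparison itself: Sturm's theorem does not directly compare an equation carrying a first-order term with its ``limiting'' constant-coefficient equation, so you should first pass to the normal form $Y=U\left(\sinh(r/\rho)\right)^{-k/2}$, which gives $U''+QU=0$ with $Q(r)\rightarrow\lambda+\kappa k^2/4>0$, and then compare with $Z''+\tau Z=0$ for some $0<\tau<\lambda+\kappa k^2/4$ on $r>\Lambda$ --- which is precisely what the paper does; your incomplete ``weighted cosine'' argument can simply be dropped in favor of this.
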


\begin{proof}
    If $\varphi_\lambda(r)=0$ at some $r<\infty$, then all the consequences stated in theorem can be obtained as the combination of theorem~3$\cdot$2$\cdot$1 and lemma~3$\cdot$2$\cdot$1 from~\cite{Artamoshin}. We just need to proof that if $\lambda>-\kappa k^2/4$, then every radial eigenfunction must be vanishing at some finite point $r$. In fact, such an eigenfunction has infinitely many zeros. Recall from~\eqref{System/DE/for/RE} that $\varphi_\lambda(r)$ is a solution for the following differential equation
    \begin{equation}\label{radial-laplac-Recall}
    Y''+\frac k\rho\coth\left(\frac r\rho\right)Y'+\lambda Y=0.
\end{equation}
The direct computation shows that the substitution $Y=U(\sinh(r/\rho))^{-k/2}$ suggested in~\cite{Simmons}, p.~119,   reduces~\eqref{radial-laplac-Recall} to
$U''+QU=0$, where
\begin{equation}\label{Expression-q-after-substitution}
    Q=Q(r)=\lambda-\frac{k^2}{4\rho^2}-\frac{k(k-2)}{4\rho^2\sinh^2(r/\rho)}
\end{equation}
It is clear that
\begin{equation}\label{limit-for-q(r)}
    \lim\limits_{r\rightarrow\infty}Q(r)=\lambda+\frac{\kappa k^2}{4}\,,\quad\text{where}\quad \kappa=-\frac{1}{\rho^2}\,.
\end{equation}
Therefore, for $\lambda>-\kappa k^2/4$ there exist positive numbers $\tau$ and $\Lambda$ such that
\begin{equation}\label{Inequalities-for-q-on-large-r}
    Q(r)>\tau>0\quad\text{for every}\quad r>\Lambda.
\end{equation}
Let us introduce $Z(r)$ as a solution of the following elementary equation $Z''+\tau Z=0$. According to the Sturm comparison theorem in \cite{Simmons}, p.~122, condition \eqref{Inequalities-for-q-on-large-r} implies that $U(r)$ vanishes at least once between any two successive zeros of $Z(r)$. It clear that $Z(r)$ has infinitely many zeros. Therefore, $U(r)$ as well as $\varphi_\lambda(r)$ must also have infinitely many zeros as $r\rightarrow\infty$. In addition, as a consequence of Theorem~\ref{Infinity-Behavior-radial-Theorem}, p.~\pageref{Infinity-Behavior-radial-Theorem}, we shall see that $\varphi_\lambda(r)$ vanishes at $\infty$.
\end{proof}


\begin{theorem}\label{Infinity-Behavior-radial-Theorem}
    If $\varphi_\lambda(r)$ is a real radial eigenfunction assuming value $1$ at the origin, then
\begin{equation}\label{Infinity-Behavior-radial-Formula}
\lim\limits_{r\rightarrow\infty}\varphi_\lambda(r)=
\left\{
  \begin{array}{ll}
    \infty, & \hbox{if $\quad \lambda<0$;} \\
    1, & \hbox{if $\quad \lambda=0$;} \\
    0, & \hbox{if $\quad \lambda>0$.}
  \end{array}
\right.
\end{equation}
\end{theorem}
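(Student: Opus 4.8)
The plan is to read the three limits straight off the integral representation of Theorem~\ref{Thm-General_Form/Of/RE/and/Identity}, using that as $r\to\infty$ the Euclidean radius $\eta=\rho\tanh(r/(2\rho))\to\rho$, so the interior point $m$ collapses onto the boundary point $u_0=\rho\,m/|m|\in S^k(\rho)$ and the kernel $\omega^\alpha$ concentrates there. Putting $\epsilon=\rho-\eta\to0^+$ and parametrizing $u\in S^k(\rho)$ by the angle $\theta=\angle uOm$, one has $dS_u=\rho^k\sin^{k-1}\theta\,d\theta\,d\Omega_{k-1}$, so that
\[
\varphi_\lambda=\frac{\rho^k\sigma_{k-1}}{|S^k(\rho)|}\int_0^\pi\omega^\alpha\sin^{k-1}\theta\,d\theta,
\qquad
\omega=\frac{\rho^2-\eta^2}{\eta^2+\rho^2-2\eta\rho\cos\theta}=\frac{\epsilon(2\rho-\epsilon)}{2\rho\eta(1-\cos\theta)+\epsilon^2}.
\]
For $\lambda\le -\kappa k^2/4=k^2/(4\rho^2)$ the root $\alpha$ is real, and I would always pick the root $\alpha\ge k/2$; the identity $\int\omega^\alpha=\int\omega^{k-\alpha}$ in~\eqref{General_Form/Of/RE/and/Identity} makes this harmless. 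As $\alpha$ runs over $[k/2,\infty)$ the value $\lambda=(\alpha k-\alpha^2)/\rho^2$ runs over $(-\infty,-\kappa k^2/4]$, with $\lambda>0$ for $\alpha\in[k/2,k)$, $\lambda=0$ for $\alpha=k$, and $\lambda<0$ for $\alpha>k$.

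Next I would carry out the concentration estimate. For fixed small $\delta>0$ and $\theta\in[\delta,\pi]$ the denominator of $\omega$ stays bounded below by a positive constant while $\rho^2-\eta^2\to0$, so $\omega^{\mathrm{Re}\,\alpha}\to0$ uniformly there and that part of the integral vanishes; the whole limit comes from $\theta\in[0,\delta]$. There I rescale $\theta=(\epsilon/\rho)s$, use $1-\cos\theta\sim\theta^2/2$, and pass to the limit by dominated convergence, obtaining
\[
\varphi_\lambda\sim C\,\epsilon^{\,k-\alpha},
\qquad
C=\frac{\sigma_{k-1}(2\rho)^{\alpha}}{|S^k(\rho)|}\int_0^\infty\frac{s^{k-1}}{(1+s^2)^{\alpha}}\,ds,
\]
the model integral converging precisely because $\alpha>k/2$. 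Reading off the exponent gives the trichotomy immediately: $\alpha>k$ (that is $\lambda<0$) forces $\epsilon^{k-\alpha}\to+\infty$, and since the integrand $\omega^\alpha$ is strictly positive the limit is $+\infty$; $\alpha\in(k/2,k)$ (that is $0<\lambda<-\kappa k^2/4$) gives $k-\alpha>0$ and limit $0$; and $\alpha=k$ (that is $\lambda=0$) gives exponent $0$ and a finite positive limit.

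Two borderlines and the complex range remain. At $\alpha=k/2$ exactly ($\lambda=-\kappa k^2/4>0$) the model integral diverges logarithmically, so the same rescaling gives $\varphi_\lambda\sim C'\,\epsilon^{k/2}\ln(1/\epsilon)\to0$, still inside $\lambda>0$. For $\lambda>-\kappa k^2/4$, where $\alpha=k/2+ib$, I would write $\omega^\alpha=\omega^{k/2}e^{ib\ln\omega}$ and take real parts to get the real representation $\varphi_\lambda=|S^k(\rho)|^{-1}\int_{S^k(\rho)}\omega^{k/2}\cos(b\ln\omega)\,dS_u$, then bound it in absolute value by $|S^k(\rho)|^{-1}\int_{S^k(\rho)}\omega^{k/2}\,dS_u$, which is exactly the $\alpha=k/2$ eigenfunction just shown to tend to $0$; hence $\varphi_\lambda\to0$, finishing the case $\lambda>0$.

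Finally, to fix the constant when $\lambda=0$ I would not lean on the asymptotic $C$ but argue directly from~\eqref{System/DE/for/RE}: with $\lambda=0$ the equation integrates to $\varphi'(r)=c\,\sinh^{-k}(r/\rho)$, and regularity at the origin forces $c=0$, so $\varphi_0\equiv1$ and the limit is exactly $1$. As a consistency check, evaluating $C$ at $\alpha=k$ through the Beta integral $\int_0^\infty s^{k-1}(1+s^2)^{-k}\,ds=\tfrac12 B(k/2,k/2)$ together with the duplication formula does return $C=1$. I expect the main obstacle to be the bookkeeping in the concentration step, namely securing the dominating function uniformly in $\epsilon$ when passing to the model integral and separately treating the logarithmic borderline $\alpha=k/2$, rather than any conceptual difficulty; note that the signs, which are the delicate point in a purely asymptotic ODE treatment of $U''+QU=0$, here come for free from the positivity of $\omega^\alpha$.
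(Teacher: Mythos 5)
Your proof is correct, but it takes a genuinely different route from the paper. The paper does not touch the integral representation at this point: it applies the Liouville substitution $Y=U(\sinh(r/\rho))^{-k/2}$ to reduce the radial equation to $U''+QU=0$ with $Q\to\lambda+\kappa k^2/4$, and then simply cites the asymptotic ODE theorems of Olver (Theorems 2.1 and 2.2) and Hartman (Corollary 9.1) to read off the behavior at infinity in the three regimes $\lambda<k^2/4\rho^2$, $\lambda>k^2/4\rho^2$, $\lambda=k^2/4\rho^2$. You instead extract the limit directly from the representation \eqref{General_Form/Of/RE/and/Identity} by a boundary-concentration (Laplace-type) analysis as $\eta\to\rho$, normalizing to the root $\alpha\ge k/2$ and obtaining the sharp rate $\varphi_\lambda\sim C\,\epsilon^{\,k-\alpha}$ with $C>0$, plus the logarithmic correction at $\alpha=k/2$ and the $\omega^{k/2}$ domination for the oscillatory range $\alpha=k/2+ib$. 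What your approach buys: it is self-contained, it yields explicit asymptotic constants and rates rather than just limits, and --- as you note --- the sign of the divergent limit for $\lambda<0$ is automatic from positivity of $\omega^\alpha$, whereas the paper's citation-based argument implicitly requires checking that $\varphi_\lambda$ has a nonzero, positive component along the growing solution of $U''+QU=0$ (the cited theorems only give a basis of solutions with prescribed asymptotics). What the paper's approach buys is brevity and uniformity: one substitution handles all $\lambda$ at once with no case split between real and complex $\alpha$ and no dominated-convergence bookkeeping. The only work left in your version is the uniform domination in the rescaled variable $s=\rho\theta/\epsilon$ (a bound of the form $\omega\le 2\rho\epsilon^{-1}(1+s^2/4)^{-1}$ valid for $\eta$ close to $\rho$ and $\theta\le\delta$ suffices) and the separate logarithmic borderline, both of which you correctly identify and which do go through; your Beta-integral check that $C=1$ at $\alpha=k$ is also consistent with the mean-value normalization $\varphi_0\equiv 1$.
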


\begin{proof}
Recall that $\varphi_\lambda(r)$ is a solution of~\eqref{radial-laplac-Recall}. As we already saw above, the direct computation shows that the substitution $Y=U(\sinh(r/\rho))^{-k/2}$ suggested in~\cite{Simmons}, p.~119,   reduces~\eqref{radial-laplac-Recall} to
$U''+QU=0$, where $Q=Q(r)$ is given in~\eqref{Expression-q-after-substitution}. Therefore,
$\varphi_\lambda(r)$ satisfies the following differential equation
$$\left[(\sinh(r/\rho))^{k/2}\varphi_\lambda(r)\right]''=
\left(\frac{k^2}{4\rho^2}-\lambda+\frac{k(k-2)}{4\rho^2\sinh^2(r/\rho)}\right)
\left[(\sinh(r/\rho))^{k/2}\varphi_\lambda(r)\right],$$
which can be investigated by the standard theory of ODE. Theorem~2.1 from \cite{Olver}, p.~193 yields the behavior of $\varphi_\lambda(r)$ for $k^2/4\rho^2>\lambda$ and $r\rightarrow\infty$. Theorem~2.2 from~\cite{Olver}, p.~196 yields the behavior of $\varphi_\lambda(r)$ for $k^2/4\rho^2<\lambda$. To describe the behavior of $\varphi_\lambda(r)$ for $\lambda=k^2/4\rho^2$ it is enough to apply Corollary~9.1 from~\cite{Hartman}, p.~380.
\end{proof}

 Let us fix $r$ and consider $\varphi_\lambda(r)$ as a function of $\alpha$. Recall Picard's Great Theorem
\begin{theorem}[Picard's Great Theorem]\label{Picard-Great-theorem}
 Let $c\in\mathbb{C}$ be an isolated essential singularity of $F$. Then, in every neighborhood of $c$, $F$ assumes every complex number as a value with at most one exception infinitely many times, see \cite{Reinhold}, (p.240).
\end{theorem}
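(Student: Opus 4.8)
The plan is to reduce the statement to Montel's fundamental normal-family theorem together with the classical classification of isolated singularities. Normalize $c=0$, so that $F$ is holomorphic on a punctured disk $0<|z|<R$ with an essential singularity at the origin. Suppose, toward a contradiction, that the conclusion fails: then there are two distinct values that $F$ assumes only finitely many times in some neighborhood of $0$. Passing to a smaller punctured disk and post-composing with a suitable M\"obius transformation of $\hat{\mathbb{C}}$, I may assume these two values are $0$ and $1$, so that $F$ omits both $0$ and $1$ on a punctured disk $0<|z|<r$.

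Next I would introduce a rescaled family to exploit the self-similar structure near the singularity. Fix the annulus $A=\{\,1/2<|z|<2\,\}$ and set $F_n(z)=F(2^{-n}z)$ for all $n$ large enough that $2^{-n}\cdot 2<r$. Each $F_n$ is holomorphic on $A$ and omits the values $0$ and $1$ there. By Montel's theorem on families omitting two fixed values, $\{F_n\}$ is a normal family on $A$ (viewed as maps into $\hat{\mathbb{C}}$), so some subsequence $F_{n_k}$ converges locally uniformly on $A$ either to a holomorphic function $G$ or to the constant $\infty$.

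The two cases are then closed off by forcing the singularity to be non-essential, contradicting the hypothesis. If $F_{n_k}\to G$ with $G$ finite-valued, then on the compact circle $|z|=1$ the quantities $\sup_{|z|=1}|F_{n_k}(z)|=\sup_{|w|=2^{-n_k}}|F(w)|$ are bounded by some constant $M$; applying the maximum modulus principle on the annular regions between consecutive circles $|w|=2^{-n_k}$ then gives $|F|\le M$ on an entire punctured neighborhood of $0$, so by Riemann's removable-singularity theorem the origin is a removable singularity, not an essential one. If instead $F_{n_k}\to\infty$, the identical argument applied to $1/F$ shows that $1/F$ is bounded and tends to $0$, whence $F$ has a pole at $0$, again not an essential singularity. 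Both alternatives contradict the assumption that $0$ is essential, which establishes that at most one value can fail to be attained infinitely often.

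The single genuinely deep step is Montel's theorem itself, and that is where I expect the real obstacle to lie: its proof rests on the existence of the modular covering map $\mathbb{H}\to\mathbb{C}\setminus\{0,1\}$, the Schwarz--Pick contraction of the Poincar\'e metric, and Marty's normality criterion. Granting that one input, the rescaling-and-removable-singularity mechanism above is elementary. Since this is a standard result of complex analysis, in the present paper it is invoked by citation rather than reproved, and the sketch above is merely the route one would follow if a self-contained proof were required.
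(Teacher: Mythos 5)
Your argument is correct: it is the standard Montel--Carath\'eodory proof of the Great Picard Theorem, and the rescaling-plus-maximum-modulus mechanism you describe (boundedness of $F_{n_k}$ on the unit circle propagating to the nested annuli, hence a removable singularity or a pole of $F$, contradicting essentiality) is sound in both cases. The only point worth tightening is the reduction step: since the two exceptional values are finite complex numbers, you should use the affine map $w\mapsto (w-a)/(b-a)$ rather than an arbitrary M\"obius transformation of $\hat{\mathbb{C}}$, so that the composed function stays holomorphic and still omits $\infty$; with that choice the family $\{F_n\}$ omits the three points $0,1,\infty$ of the sphere and Montel's theorem applies as you use it. The comparison with the paper is moot, however: the paper does not prove this statement at all --- it is recalled as a classical theorem with a citation to Remmert (p.~240) and used only as an input to Lemma~\ref{Picard-Application} --- so your sketch supplies a proof where the paper deliberately offers none, and your closing remark correctly anticipates this.
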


\begin{lemma}\label{Picard-Application} For every $\beta\in \mathbb{C}$ there are infinitely many numbers $\alpha\in\mathbb{C}$ such that
\begin{equation}\label{Deinitoin-F(alpha)/in/Picard}
    F(\alpha)\equiv\int\limits_{S^k(\rho)}\omega^\alpha dS_u=
    \int\limits_{S^k(\rho)}\omega^\beta dS_u\,.
\end{equation}
\end{lemma}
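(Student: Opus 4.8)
The plan is to treat $F(\alpha)=\int_{S^k(\rho)}\omega^\alpha\,dS_u$ as an entire function of the single complex variable $\alpha$ and to extract the conclusion from Picard's Great Theorem (Theorem~\ref{Picard-Great-theorem}) applied at the point $\alpha=\infty$. Throughout I fix a point $m$ with $0<\eta=|m|<\rho$; the degenerate case $\eta=0$ gives $\omega\equiv1$, so that $F$ is constant and the statement is trivially true. First I would verify that $F$ is entire: for each fixed $u\in S^k(\rho)$ the number $\omega(m,u)=(\rho^2-\eta^2)/|m-u|^2$ is real and strictly positive, so $\omega^\alpha=e^{\alpha\ln\omega}$ is entire in $\alpha$; since the integrand is jointly continuous and the domain $S^k(\rho)$ is compact, differentiation under the integral sign (or Morera's theorem together with Fubini) shows that $F$ is holomorphic on all of $\mathbf{C}$.

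The second step is to locate an essential singularity. Since $\ln\omega\in[-b_0,b_0]$ with $b_0=\ln\frac{\rho+\eta}{\rho-\eta}>0$, one has $|F(\alpha)|\le|S^k(\rho)|\,e^{b_0|\alpha|}$, so $F$ is of exponential type and hence of order at most $1$. On the other hand $\omega$ attains its maximum $\omega_{\max}=(\rho+\eta)/(\rho-\eta)>1$ at the point of $S^k(\rho)$ nearest to $m$, and a Laplace-type estimate of $F(x)=\int_{S^k(\rho)}e^{x\ln\omega}\,dS_u$ for real $x\to+\infty$ shows that $F(x)$ grows exponentially, faster than any polynomial. Hence $F$ is transcendental of order exactly $1$, and an entire transcendental function has an essential singularity at $\infty$; equivalently $w=0$ is an isolated essential singularity of $G(w)=F(1/w)$. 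Applying Theorem~\ref{Picard-Great-theorem} to $G$ at $w=0$ and translating back through $\alpha=1/w$, the function $F$ assumes every complex value, with at most one exception, at infinitely many $\alpha$. This already proves the lemma for every $\beta$ for which $F(\beta)$ is not the single possible exceptional value.

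The main obstacle is precisely this lone exceptional value: a priori $F(\beta)$ could be it, since $F$ has the integer order $1$ (recall that $e^{z}$, also of order $1$, omits the value $0$). To remove this possibility I would invoke the reflection identity $F(\alpha)=F(k-\alpha)$ furnished by Theorem~\ref{Thm-General_Form/Of/RE/and/Identity}. It makes $F$ even about $\alpha=k/2$, so $F(\alpha)=\Phi\big((\alpha-k/2)^2\big)$ for a uniquely determined entire function $\Phi$. Since $F$ has order $1$ and the substitution $\zeta=(\alpha-k/2)^2$ squares the variable, $\Phi$ has order $1/2$, which is non-integral. By the Hadamard factorization theorem an entire function of non-integral finite order can have no finite exceptional value: if $\Phi-a$ had only finitely many zeros it would equal $Q(\zeta)e^{g(\zeta)}$ with $Q$ a polynomial and $\deg g$ an integer equal to the order, contradicting order $1/2$.

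Consequently $\Phi(\zeta)=F(\beta)$ has infinitely many roots $\zeta$, and each root yields the two values $\alpha=k/2\pm\sqrt{\zeta}$; these produce infinitely many distinct $\alpha$ with $F(\alpha)=F(\beta)$, which is the assertion of the lemma. I expect the routine parts (holomorphy of $F$, the growth estimates fixing the order) to be straightforward, and the genuine content to lie in the last two steps: recognizing that Picard alone leaves one value unaccounted for, and using the reflection symmetry to fold $F$ into a function of non-integral order $1/2$ so that no value can be omitted.
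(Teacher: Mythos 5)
Your proof is correct, and in its decisive step it is more careful than the paper's own argument. Both proofs begin the same way: $F$ is entire, the reflection identity $F(\alpha)=F(k-\alpha)$ from Theorem~\ref{Thm-General_Form/Of/RE/and/Identity} makes $F$ even about $\alpha=k/2$, and $\infty$ is identified as an essential singularity --- the paper does this by writing out the Taylor expansion at $k/2$ and noting that all even coefficients $\int_{S^k(\rho)}\omega^{k/2}(\ln\omega)^{2m}\,dS_u$ are positive, while you do it by an order-of-growth estimate; both routes are sound. The two arguments part ways over the single exceptional value allowed by Picard's Great Theorem. The paper dismisses it on the grounds that the value $F(\beta)$ ``is already assumed by $F$'' at $\beta$ itself; as your proposal implicitly recognizes, this is not conclusive, because the Picard exception is a value assumed at most finitely many times near the singularity, not necessarily a value omitted outright (the function $ze^{z}$ assumes $0$ exactly once, yet $0$ is its exceptional value at $\infty$). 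Your resolution --- using the evenness to write $F(\alpha)=\Phi\bigl((\alpha-k/2)^2\bigr)$ with $\Phi$ entire of non-integral order $1/2$, and then invoking Hadamard/Borel to conclude that $\Phi-a$ has infinitely many zeros for every $a$ --- closes this gap rigorously and in fact makes the appeal to Picard's Great Theorem superfluous. The one ingredient worth spelling out is the lower growth bound that pins the order of $F$ at exactly $1$ (hence that of $\Phi$ at exactly $1/2$): for $\eta>0$ the kernel $\omega$ exceeds $1+\delta$ on a set of positive measure, so $F(x)\geq c\,(1+\delta)^{x}$ as $x\to+\infty$ along the reals, which is precisely what your Laplace-type estimate supplies.
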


 \begin{proof}[Proof of Lemma~\ref{Picard-Application}] This lemma can be obtained as a consequence of Picard's Great Theorem stated above. Indeed, the direct computation shows that $F$ is an entire function, which means that $\infty$ is an isolated singularity of $F$. To see that $\infty$ is the essential singularity for $F(\alpha)$, let us obtain the Taylor decomposition of $F$ at the point $\alpha=k/2$. Observe that

\begin{equation}\label{Integral-Id-with/pm(ib)}
    \int\limits_{S^k(\rho)}\omega^{k/2+ib}dS_u=
    \int\limits_{S^k(\rho)}\omega^{k/2-ib}dS_u\,.
\end{equation}
Comparing the imaginary parts of this identity we can observe that both of them must be identically equal to zero and then, for $\alpha=k/2+ib$ we have
    \begin{equation}\label{Integral/without/Imaginary}
        F(\alpha)=\int\limits_{S^k(\rho)}\omega^\alpha dS_u=
        \int\limits_{S^k(\rho)}\omega^{k/2}\cos(b\ln\omega) dS_u\,.
    \end{equation}
The repeated differentiation with respect to $z=k/2+ib$ yields
    \begin{equation}\label{Taylor-Deco-Proof-2}
        F^{(2m+1)}(k/2)=0 \quad\text{and}\quad
        F^{(2m)}(k/2)=\int\limits_{S^k(\rho)}\omega^{k/2}(\ln\omega)^{2m}dS>0
    \end{equation}
for $m=0,1,2,\cdots$. Therefore,
\begin{equation}\label{Taylor-Decomposition}
    F(\alpha)=\int\limits_{S^k(\rho)}\omega^{\alpha}dS_u
    =\sum\limits_{m=0}^{\infty}\frac{(\alpha-k/2)^{2m}}{(2m)!}
    \int\limits_{S^k(\rho)}\omega^{k/2}(\ln\omega)^{2m}dS_u\,.
\end{equation}
This Taylor decomposition shows that $\infty$ is the essential singularity for $F(\alpha)$, since all even Taylor coefficients are positive. It is clear also, we can not meet an exception mentioned in Picard's theorem, since we are looking for complex numbers $\beta$, satisfying $F(\beta)=F(\alpha)$ for a given $\alpha$. This means that the value $F(\alpha)$ is already assumed by $F$, and then, by Picard's Great Theorem, $F$ must attain this value infinitely many times in every neighborhood of an isolated essentially singular point, i.e., at every neighborhood of $\infty$, in our case. This completes the proof of Lemma~\ref{Picard-Application}.
\end{proof}

\section{One Radius theorem for a real radial eigenfunction.}

\begin{theorem}[One Radius Theorem for Large Real Eigenfunctions]\label{OneRad_LargeValues}
    If $\varphi_\lambda(r)$ is a real radial eigenfunction assuming value $1$ at the origin and satisfying inequality in~\eqref{BoundaryFcn} at a particular point $r$, then the corresponding real eigenvalue can be uniquely restored.
\end{theorem}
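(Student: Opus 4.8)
The plan is to fix the point $r$ (equivalently the Euclidean radius $\eta=\rho\tanh(r/(2\rho))$) and to study $\varphi_\lambda(r)$ as a function of the parameter $\alpha$ through the integral representation of Theorem~\ref{Thm-General_Form/Of/RE/and/Identity}, so that $\varphi_\lambda(r)=F(\alpha)/|S^k(\rho)|$ with $F$ as in Lemma~\ref{Picard-Application}. The decisive move is to reparametrize by $w=(\alpha-k/2)^2$. By the Taylor expansion~\eqref{Taylor-Decomposition}, $F$ depends on $\alpha$ only through $w$, so it becomes an entire function $\widetilde F(w)=\sum_{m\geq0}\frac{c_m}{(2m)!}w^m$ with strictly positive coefficients $c_m=\int_{S^k(\rho)}\omega^{k/2}(\ln\omega)^{2m}\,dS_u>0$. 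On the other hand, the quadratic relation in~\eqref{Alpha-Lambda-Equation/And/Omega} gives $\lambda=-\kappa(k^2/4-w)$, an affine, strictly decreasing bijection between $w$ and $\lambda$ (recall $\kappa<0$). Hence restoring $\lambda$ is exactly the same as restoring $w$, and the whole problem reduces to understanding the map $w\mapsto\widetilde F(w)$.

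Next I would separate the two regimes dictated by Remark~\ref{Rmk-Lambda/Real/Conditoin}: a real eigenvalue forces either real $\alpha$, i.e. $w\geq0$ (the never-vanishing eigenfunctions with $\lambda\leq-\kappa k^2/4$), or $\alpha=k/2+ib$, i.e. $w=-b^2\leq0$ (the vanishing eigenfunctions with $\lambda>-\kappa k^2/4$). On the half-line $w\geq0$ the positivity of the coefficients $c_m$ gives $\widetilde F\,'(w)=\sum_{m\geq1}\frac{m\,c_m}{(2m)!}w^{m-1}>0$, so $\widetilde F$ is strictly increasing and therefore injective there; a single value of $\widetilde F$ then pins down $w$, hence $\lambda$, uniquely.

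It remains to rule out the vanishing regime using the hypothesis~\eqref{BoundaryFcn}. For $w=-b^2<0$ the identity~\eqref{Integral/without/Imaginary} rewrites $\widetilde F(w)=\int_{S^k(\rho)}\omega^{k/2}\cos(b\ln\omega)\,dS_u$, and since $\cos(b\ln\omega)\leq1$ everywhere, $\widetilde F(w)\leq\widetilde F(0)=|S^k(\rho)|\,V(\eta)$. The inequality is strict because, for $\eta>0$, the function $\ln\omega$ is non-constant as $u$ runs over $S^k(\rho)$, so $\cos(b\ln\omega)<1$ on a set of positive measure whenever $b\neq0$. Consequently the assumption $\varphi_\lambda(r)\geq V(\eta)$, i.e. $\widetilde F(w)\geq\widetilde F(0)$, is incompatible with $w<0$ and forces $w\geq0$. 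Combining this with the injectivity on $w\geq0$ established above completes the restoration of $\lambda$.

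The step I expect to be the main obstacle is the strict inequality $\widetilde F(w)<\widetilde F(0)$ for $w<0$, since it is what actually excludes the oscillatory (vanishing) branch, on which distinct eigenvalues can share one value of $\varphi_\lambda(r)$. Everything there hinges on checking that $\omega(m,u)=(\rho^2-\eta^2)/|m-u|^2$ genuinely varies as $u$ ranges over $S^k(\rho)$ for fixed $m\in S^k(\eta)$ with $\eta>0$; geometrically this is immediate from Lemma~\ref{Lemma-Geo-Interpre-Omega} because $|m-u|$ is non-constant, but it must be recorded carefully, together with the degenerate case $\eta=0$ (the point $r=0$), where $\omega\equiv1$, every eigenfunction equals $1$, and the statement is vacuous.
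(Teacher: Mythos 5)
Your proof is correct and follows essentially the same route as the paper: the paper establishes monotonicity on the real-$\alpha$ branch via convexity of $\alpha\mapsto\varphi_\lambda(r)$ (the positive second derivative in~\eqref{Real Convex}) together with the symmetry about $\alpha=k/2$, which is exactly what your reparametrization $w=(\alpha-k/2)^2$ with positive Taylor coefficients packages, and it excludes the oscillatory branch by the same strict inequality $\int\omega^{k/2}\cos(b\ln\omega)\,dS_u<\int\omega^{k/2}\,dS_u$ appearing in~\eqref{EF-Inequality-Sequance/for/Alpha-k-b}. Your explicit attention to the strictness of that inequality (non-constancy of $\omega$ for $\eta>0$) and to the degenerate case $r=0$ is a welcome refinement of detail, not a change of method.
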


\begin{proof}
    Recall from theorem~\ref{Thm-General_Form/Of/RE/and/Identity} that $\lambda=-\kappa(\alpha k-\alpha^2)$ and every radial eigenfunction $\varphi_\lambda(r)$ can be represented by any of the integrals in~\eqref{General_Form/Of/RE/and/Identity}. This $\lambda$, by the remark~\ref{Rmk-Lambda/Real/Conditoin}, is real if and only if $\alpha$ is real or $\alpha=k/2+ib$. In the case if $\alpha$ is real, the second derivative of the first integral in~\eqref{General_Form/Of/RE/and/Identity} yields
    \begin{equation}\label{Real Convex}
        \frac{d^2\varphi_\lambda(r)}{(d\alpha)^2}=\frac{1}{|S^k(\rho)|}\int\limits_{S^k(\rho)}
        \omega^{\alpha}(m,u)(\ln\omega)^2 dS_u>0\,.
    \end{equation}
    The last inequality together with the symmetry of the integral with respect to $\alpha=k/2$, provided by the integral identity~\eqref{General_Form/Of/RE/and/Identity}, implies that for every fixed $r$, $\varphi_\lambda(r)$ is strictly decreasing as a function of $\lambda\in(-\infty, -\kappa k^2/4]$ and achieves its minimum at $\alpha=k/2$. Therefore, the inequality in~\eqref{BoundaryFcn} holds for every real $\alpha$ or equivalently, for every $\lambda\leq-\kappa k^2/4$. To complete the proof, we just have to show that inequality in formula~\eqref{BoundaryFcn} fails for every $\lambda>-\kappa k^2/4$. Note that for such $\lambda$'s, by theorem~\ref{Thm-Vanishing-RE-Final}, $\varphi_\lambda(r)$ can be represented by the last integral formula in~\eqref{Vanishing-RE-General}, which allows us to write the following sequence of inequalities.
    \begin{equation}\label{EF-Inequality-Sequance/for/Alpha-k-b}
        \frac{1}{|S^k(\rho)|}\int\limits_{S^k(\rho)}
        \omega^\alpha dS_u
        \geq\frac{1}{|S^k(\rho)|}\int\limits_{S^k(\rho)}
        \omega^{k/2} dS_u
        >\frac{1}{|S^k(\rho)|}\int\limits_{S^k(\rho)}
        \omega^{k/2}\cos(b\ln\omega) dS_u\,,
    \end{equation}
    where the last integral formula was obtained in theorem~\ref{Thm-Vanishing-RE-Final} and represented a real radial eigenfunction with $b\neq0$ and vanishing at some finite point, while the first two integral formulae represent a never vanishing real radial eigenfunction $\varphi_\lambda(r)$. It is clear that the first inequality in~\eqref{EF-Inequality-Sequance/for/Alpha-k-b} holds for every point $r$ and real $\alpha$ or equivalently, for every $\lambda\leq-\kappa k^2/4$. The second inequality holds at every point $r$ and for every $b\neq 0$ or equivalently, for every $\lambda\geq-\kappa k^2/4$. This completes the proof of theorem~\ref{OneRad_LargeValues}.
\end{proof}



\begin{remark}
    Consider radial eigenfunctions satisfying the system~\eqref{System/DE/for/RE}. The second integral formula in~\eqref{EF-Inequality-Sequance/for/Alpha-k-b} represents a never vanishing radial eigenfunction that separates never vanishing eigenfunctions and eigenfunctions assuming zero at some finite point. According to theorem~\ref{Infinity-Behavior-radial-Theorem}, this separation function is vanishing at infinity.
\end{remark}

Note that the argument used in the proof of theorem~\ref{OneRad_LargeValues} lead us to the following statement.

\begin{corollary}\label{Cor-Separator-Lambdas}
    Consider a real radial eigenfunctions $\varphi_\lambda(r)$ satisfying the system~\eqref{System/DE/for/RE}.

    If such an eigenfunction satisfies the inequality in~\eqref{BoundaryFcn}, then $\lambda\leq-\kappa k^2/4$ and $\alpha$ is real.

    If $\varphi_\lambda(r)$ fails the inequality in~\eqref{BoundaryFcn}, then $\lambda>-\kappa k^2/4$ and $\alpha=k/2+ib$ with $b\neq0$.
\end{corollary}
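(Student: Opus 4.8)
The plan is to read the corollary off directly from the two implications already established inside the proof of Theorem~\ref{OneRad_LargeValues}, reorganized as a contrapositive. By Remark~\ref{Rmk-Lambda/Real/Conditoin}, a real eigenvalue $\lambda$ forces $\alpha$ into exactly one of two mutually exclusive and jointly exhaustive cases: either $\alpha$ is real, in which case $\lambda\leq-\kappa k^2/4$, or $\alpha=k/2+ib$ with $b\neq0$, in which case $\lambda>-\kappa k^2/4$. Thus it suffices to show that the inequality in~\eqref{BoundaryFcn} discriminates perfectly between these two cases, after which the corollary is a matter of logic.

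First I would recall the convexity and symmetry argument from the proof of Theorem~\ref{OneRad_LargeValues}. For real $\alpha$ the map $\alpha\mapsto\varphi_\lambda(r)$ is strictly convex by the positivity in~\eqref{Real Convex} and symmetric about $\alpha=k/2$ by the integral identity in~\eqref{General_Form/Of/RE/and/Identity}; hence, at each fixed $r$, its minimum over real $\alpha$ is attained at $\alpha=k/2$ and equals exactly $V(\eta(r))$. Consequently, real $\alpha$ (equivalently $\lambda\leq-\kappa k^2/4$) forces the inequality in~\eqref{BoundaryFcn} to hold. Conversely, for $\alpha=k/2+ib$ with $b\neq0$ (equivalently $\lambda>-\kappa k^2/4$), the strict chain~\eqref{EF-Inequality-Sequance/for/Alpha-k-b} gives $\varphi_\lambda(r)<V(\eta(r))$, so the inequality in~\eqref{BoundaryFcn} fails.

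The concluding step is purely logical. Because the two cases of Remark~\ref{Rmk-Lambda/Real/Conditoin} are disjoint and exhaust all real-$\lambda$ possibilities, and because one case always makes~\eqref{BoundaryFcn} hold while the other always makes it fail, the two forward implications upgrade to biconditionals by exclusion. Therefore, if a real radial eigenfunction satisfies the inequality in~\eqref{BoundaryFcn}, it cannot belong to the case $\alpha=k/2+ib$ (which would violate it), so $\alpha$ is real and $\lambda\leq-\kappa k^2/4$; and if it fails the inequality, it cannot belong to the real-$\alpha$ case, so $\alpha=k/2+ib$ with $b\neq0$ and $\lambda>-\kappa k^2/4$.

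I do not anticipate a genuine obstacle, since all of the analytic content was already carried out in Theorem~\ref{OneRad_LargeValues}. The only point that requires care is verifying that the two alternatives of Remark~\ref{Rmk-Lambda/Real/Conditoin} are truly exhaustive for real $\lambda$, so that ruling out one alternative forces the other and the contrapositive reasoning is airtight.
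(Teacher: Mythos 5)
Your proposal is correct and is essentially the paper's own argument: the paper proves this corollary simply by observing that the two implications established inside the proof of Theorem~\ref{OneRad_LargeValues} (real $\alpha$ forces~\eqref{BoundaryFcn} to hold via convexity and symmetry about $\alpha=k/2$; $\alpha=k/2+ib$ with $b\neq0$ forces it to fail via~\eqref{EF-Inequality-Sequance/for/Alpha-k-b}), combined with the dichotomy of Remark~\ref{Rmk-Lambda/Real/Conditoin}, yield the stated biconditional by exclusion. Your write-up just makes that contrapositive step explicit.
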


To analyse the uniqueness of the corresponding eigenvalue in the case if radial eigenfunction fails the inequality in \eqref{BoundaryFcn}, we need to obtain the upper bound for the corresponding real eigenvalue based on a non-zero value of $\varphi_\lambda(r)$, or equivalently, an upper bound for the parameter $b$ in the decomposition of $\alpha=k/2+ib$.

\begin{theorem}[An Upper Bound for $\lambda$]\label{Thm-UpperBoundEgenvalue}
     Let a real radial eigenfunction assumes the value $1$ at the origin and $\varphi_\lambda(r)\neq 0$ for some point $r$. Let the eigenfunction fails the inequality in \eqref{BoundaryFcn}. Then, the corresponding eigenvalue together with the parameter $b$ can be estimated from above as follows.

If $k=1$, then
\begin{equation}
        |b|\leq\max\left\{\frac{2}{\pi}, \frac{T_1(r,\kappa)}{\varphi^2_\lambda(r)}\right\}
        \quad\text{and}\quad
        \lambda\leq -\frac{\kappa k^2}{4}-\kappa \left\{\max\left\{\frac{2}{\pi}, \frac{T_1(r,\kappa)}{\varphi_\lambda^2(r)}\right\}
        \right\}^2,
\end{equation}
where $T_1(r,\kappa)=(9r)(\rho^2-\eta^2)/(\rho\eta^2\pi^2)$.

     If $k=2$, then
    \begin{equation}
        |b|\leq\frac{T_2(r,\kappa)}{|\varphi_\lambda(r)|}\quad\text{and}\quad
        \lambda\leq-\frac{\kappa k^2}{4}-\kappa \left(\frac{T_2(r,\kappa)}{|\varphi_\lambda(r)|}\right)^2,
    \end{equation}
where $T_2(r,\kappa)=1/\sinh(r/\rho)$.

If $k\geq3$, then
\begin{equation}
    |b|\leq\frac{T_3(\kappa,k,r)}{|\varphi_\lambda(r)|}
    \quad\text{and}\quad
    \lambda\leq -\frac{\kappa  k^2}{4}-\kappa\left(\frac{T_3(\kappa,k,r)}{|\varphi_\lambda(r)|}\right)^2\,,
\end{equation}
where $T_3=(k-2)\pi\rho(\rho^2-\eta^2)^{k/2}\sigma_{k-1}/(2\eta|S(\rho)|)$ and $|S(\rho)|$ is the volume of $k$-dimensional sphere.

\end{theorem}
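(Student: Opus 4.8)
The plan is to run all three dimensional regimes through one common reduction and then branch. Since the eigenfunction \emph{fails} the inequality in~\eqref{BoundaryFcn}, Corollary~\ref{Cor-Separator-Lambdas} forces $\alpha=k/2+ib$ with $b\neq 0$, and the quadratic relation in~\eqref{Alpha-Lambda-Equation/And/Omega} gives $\lambda=-\kappa k^2/4-\kappa b^2$. Because $-\kappa=1/\rho^2>0$, each stated bound on $\lambda$ is literally the corresponding bound on $|b|$ pushed through this identity, so the entire theorem reduces to producing the three upper bounds for $|b|$. For that I start from the oscillatory representation~\eqref{Vanishing-RE-Final} and change the integration variable from $\psi$ to $t=\ln(l/q)=\ln\omega$, using the first identity in~\eqref{Deriva-of-Omega/and/(l+q)} as $dt=\tfrac{4\eta\sin\psi}{l+q}\,d\psi$. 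This converts the weight into
\begin{equation}
\frac{\sin^{k-1}\psi}{l+q}\,d\psi=\frac{\sin^{k-2}\psi}{4\eta}\,dt,
\end{equation}
and from Lemma~\ref{Lemma-Geo-Interpre-Omega} one reads off that $t$ runs between $0$ (at $\psi=\pi/2$, where $l=q$) and $\pm r/\rho$ (at $\psi=0$, where the chord is a diameter and $l/q=(\rho\pm\eta)/(\rho\mp\eta)$, so $|t|=\rho^{-1}\ln\frac{\rho+\eta}{\rho-\eta}=r/\rho$).

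For $k=2$ the substitution is exact, since $\sin^{k-2}\psi\equiv 1$; writing $C=4\rho(\rho^2-\eta^2)^{k/2}\sigma_{k-1}/|S^k(\rho)|$ we get $\varphi_\lambda(r)=\tfrac{C}{4\eta}\int\cos(bt)\,dt$, and integrating exactly yields $|\varphi_\lambda(r)|=\tfrac{C}{4\eta|b|}\,|\sin(br/\rho)|\le \tfrac{C}{4\eta|b|}$. A direct simplification with $\eta=\rho\tanh(r/2\rho)$ shows $C/4\eta=1/\sinh(r/\rho)=T_2$, giving the $k=2$ bound. For $k\ge 3$ the amplitude $\sin^{k-2}\psi$ is nonconstant but vanishes at $\psi=0$, so after one integration by parts in $t$ both boundary contributions disappear (one because $\sin^{k-2}0=0$, the other because $\sin(b\cdot 0)=0$), leaving $\varphi_\lambda(r)=-\tfrac{C(k-2)}{4\eta b}\int_0^{\pi/2}\sin(bt)\,\sin^{k-3}\psi\cos\psi\,d\psi$. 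Estimating $|\sin(bt)|\le 1$ and $\sin^{k-3}\psi\cos\psi\le 1$ over the interval of length $\pi/2$ gives $|\varphi_\lambda(r)|\le \tfrac{C(k-2)\pi}{8\eta|b|}$, and inserting $C$ reproduces exactly $T_3$.

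The genuinely hard case is $k=1$, and this is where the above mechanism breaks: the substitution now produces the singular weight $\tfrac{1}{4\eta\sin\psi}\,dt$, so the clean integration by parts is unavailable. The reason is geometric. The phase $\phi(\psi)=b\ln\omega$ has $\phi'(\psi)=4b\eta\sin\psi/(l+q)$, which vanishes at the endpoint $\psi=0$ while $\phi''(0)\neq 0$, so $\psi=0$ is a nondegenerate stationary point of the oscillation. Hence $\varphi_\lambda(r)$ decays only like $|b|^{-1/2}$ rather than $|b|^{-1}$, and it is exactly this half-power that forces $\varphi_\lambda^2(r)$ into the denominator of $T_1$. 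My plan here is a van der Corput type second-derivative estimate: a direct computation shows that $p(\psi)=\sin\psi/(l+q)$ has $p'(\psi)>0$ on $[0,\pi/2)$ (it is a positive multiple of $\cos\psi\,(\rho^2-\eta^2\sin^2\psi)^{-3/2}$), so $\phi'$ is monotone, and controlling the single stationary point at $\psi=0$ yields $|\varphi_\lambda(r)|\le \text{const}\cdot|b|^{-1/2}$ with an explicit constant. Solving for $|b|$ produces $T_1/\varphi_\lambda^2(r)$ with $T_1=9r(\rho^2-\eta^2)/(\rho\eta^2\pi^2)$. Since the stationary-phase bound is only informative once $|b|$ exceeds a fixed threshold, the remaining small values are absorbed by the trivial inequality $|b|\le 2/\pi$, which is precisely why the $k=1$ estimate appears as a maximum of the two quantities.

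The main obstacle is therefore the $k=1$ case: isolating the stationary point at $\psi=0$, extracting the sharp $|b|^{-1/2}$ decay from an integral with a square-root endpoint singularity, and tracking the numerical constants closely enough to land on $9/\pi^2$ and the threshold $2/\pi$. The regimes $k=2$ and $k\ge 3$ are essentially bookkeeping once the substitution $t=\ln\omega$ is in place, since there the oscillatory integral either integrates in closed form or succumbs to a single integration by parts with vanishing boundary terms. In all three cases the final step is identical: feed the bound on $|b|$ into $\lambda=-\kappa k^2/4-\kappa b^2$, which, because $-\kappa>0$, turns an upper bound on $b^2$ into the claimed upper bound on $\lambda$.
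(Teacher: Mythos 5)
Your overall strategy is the same as the paper's: reduce everything to an upper bound on $|b|$ via Corollary~\ref{Cor-Separator-Lambdas} and $\lambda=-\kappa(k^2/4+b^2)$, pass to the variable $t=\ln\omega$ using Lemma~\ref{Lemma-Geo-Interpre-Omega}, integrate exactly for $k=2$, integrate by parts once for $k\ge 3$ (with boundary terms killed by $\sin^{k-2}0=0$ and $\sin(b\cdot 0)=0$), and treat $k=1$ as a stationary-phase problem at $\psi=0$. Your $k=2$ and $k\ge 3$ computations are correct and reproduce $T_2$ and $T_3$ exactly as in the paper.

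The genuine gap is the $k=1$ case, which you yourself identify as the main obstacle but leave as a plan: you assert that a ``van der Corput type second-derivative estimate'' will give $|\varphi_\lambda(r)|\le \mathrm{const}\cdot|b|^{-1/2}$ with a constant that, after squaring, lands on $T_1=9r(\rho^2-\eta^2)/(\rho\eta^2\pi^2)$ and the threshold $2/\pi$, but you never derive that constant, and the off-the-shelf lemma does not apply directly. Two concrete problems: first, the phase $\phi(\psi)=\ln\omega$ satisfies $\phi''(\psi)=2\eta\rho^2\cos\psi\,(\rho^2-\eta^2\sin^2\psi)^{-3/2}$, which vanishes at $\psi=\pi/2$, so $|\phi''|$ has no positive lower bound on $[0,\pi/2]$ and the pure second-derivative test fails there; second, after your own substitution the integrand is $e^{ib\tau}/\sin\psi$ with an amplitude blowing up like $\tau^{-1/2}$ exactly at the stationary point, which the standard statement (for amplitudes of bounded variation) does not cover. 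The paper's actual argument is bespoke: it proves that $\sqrt{\tau}/\sin\psi$ is monotone increasing on $[0,\pi/2]$ with maximum $\sqrt{r/\rho}$ (using $(l+q)'_\psi<0$ from the second identity in~\eqref{Deriva-of-Omega/and/(l+q)}), splits the integral at $\tau=1/b$, bounds the near piece by $\sqrt{r/\rho}\int_0^{1/b}\tau^{-1/2}d\tau=\sqrt{r/\rho}\,b^{-1/2}$ and the far piece by $2\sqrt{r/\rho}\,b^{-1/2}$ via one integration by parts, so that the total factor $3$ becomes the $9$ in $T_1$ and the restriction $b\ge 2/\pi$ is exactly where the $\max$ with $2/\pi$ comes from. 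Without this (or an equivalent) explicit chain, the specific constants in the $k=1$ statement remain unsubstantiated, so the hardest third of the theorem is not actually proved.
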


\begin{proof}
    Since $\varphi_\lambda(r)$ fails the inequality in~\eqref{BoundaryFcn}, we conclude by corollary~\ref{Cor-Separator-Lambdas} that $\alpha=k/2+ib$ with $b\neq 0$, or equivalently, $\lambda>-\kappa k^2/4$. In this case, by theorem~\ref{Thm-Vanishing-RE-Final}, the eigenfunction can be represented as 
    \begin{equation}\label{Vanishing-RE-Final/Repro}
        \varphi_\lambda(r(\eta))
        =\frac{4\rho(\rho^2-\eta^2)^{k/2}\sigma_{k-1}}{|S(\rho)|}\int\limits_0^{\pi/2}
        \frac{\sin^{k-1}\psi}{l+q}\cos\left(b\ln\frac{l}{q}\right)d\psi\,,
    \end{equation}
with $b$ defined in~\eqref{Lambda-Alpha-k-Thm}. Refer to Figure~\ref{BallSpacePortrait} for the notation. For the next step let us recall the substitution for $d\psi$ given in~\eqref{Deriva-of-Omega/and/(l+q)},

\begin{equation}\label{Substitution-for-d-psi/Repro}
   d\psi=\frac{l+q}{4b\eta\sin\psi}d(b\ln\omega)\,.
\end{equation}

If $k=2$, then the integration by parts applied to \eqref{Vanishing-RE-Final/Repro} yields
\begin{equation}
    \varphi_\lambda(r(\eta))
    =\frac{\rho^2-\eta^2}{2\eta\rho b} \sin\left(b\ln\frac{\rho+\eta}{\rho-\eta}\right)
    =\frac{\rho^2-\eta^2}{2\eta\rho b} \sin\left(\frac{br}{\rho}\right)\,.
\end{equation}
Therefore,
\begin{equation}
    |b|\leq\frac{1}{|\varphi_\lambda(r(\eta))|}\frac{\rho^2-\eta^2}{2\eta\rho}
    =\frac{1}{\sinh(r/\rho)}\frac{1}{|\varphi_\lambda(r)|}
    =\frac{T_2(r,\kappa)}{|\varphi_\lambda(r)|}\,,
\end{equation}
where $T_2(r,\kappa)=1/\sinh(r/\rho)$. Now, using the expression for $b$, we obtain the following upper bound for $\lambda$.
\begin{equation}
    \lambda\leq-\frac{\kappa k^2}{4}-\kappa \left(\frac{T_2(r,\kappa)}{|\varphi_\lambda(r)|}\right)^2\,.
\end{equation}

If $k\geq 3$, then the same integration by parts applied to the integral in \eqref{Vanishing-RE-Final/Repro}, yields
\begin{equation}
    \varphi_\lambda(r(\eta))
    =\frac{4\rho(\rho^2-\eta^2)^{k/2}\sigma_{k-1}}{|S(\rho)|}\frac{-1}{4b\eta}
    \int\limits_0^{\pi/2}
    \sin(b\ln\omega)d\sin^{k-2}\psi\,,
\end{equation}
and therefore,
\begin{equation}
    |\varphi_\lambda(r)|
    \leq\frac{4\rho(\rho^2-\eta^2)^{k/2}\sigma_{k-1}}{|S(\rho)|}\frac{k-2}{4b\eta}\frac{\pi}{2}
    =\frac{T_3(\kappa,k,r)}{|b|}\,,
\end{equation}
where $T_3=(k-2)\pi\rho(\rho^2-\eta^2)^{k/2}\sigma_{k-1}/(2\eta|S(\rho)|)$. Thus, as above, we conclude
\begin{equation}
    |b|\leq\frac{T_3(\kappa,k,r)}{|\varphi_\lambda(r)|}
    \quad\text{and}\quad
    \lambda\leq -\frac{\kappa  k^2}{4}-\kappa\left(\frac{T_3(\kappa,k,r)}{|\varphi_\lambda(r)|}\right)^2\,.
\end{equation}

The most difficult upper bound happens to be for $k=1$. In this case integration by parts does not help and we need to develop a technique motivated by the proof of Stationary Phase Theorem from~\cite{Olver}, (p.101, Theorem 13.1). In this case formula \eqref{Vanishing-RE-Final/Repro} yields
    \begin{equation}
        \frac{\pi}{4}\frac{\varphi_\lambda(r)}{\sqrt{\rho^2-\eta^2}}
        =\int\limits_0^{\pi/2}
        \frac{\cos(b\ln\omega)}{l+q}d\psi
        =\Re{\int\limits_0^{\pi/2}\frac{e^{ib\ln\omega}}{l+q}d\psi}\,.
    \end{equation}

Therefore, using the substitution for $d\psi$, we can write the following estimate for $\varphi(r)$.
    \begin{equation}\label{Varphi_Estimate_One_Dim}
        \frac{\pi}{4}\frac{|\varphi_\lambda(r)|}{\sqrt{\rho^2-\eta^2}}\leq
        \left|\int\limits_0^{\pi/2}\frac{e^{ib\ln\omega}}{l+q}
        \frac{l+q}{4\eta\sin\psi}d\ln\omega\right|
        =\frac{1}{4\eta}\left|\int\limits_0^{\pi/2}\frac{e^{ib\tau}}{\sin\psi}d\tau\right|
        =\frac{|I|}{4\eta} ,
    \end{equation}
where $\tau=\ln\omega-\ln(\omega(0))$, and $I$ is the last integral in \eqref{Varphi_Estimate_One_Dim}. The direct calculation shows that
    \begin{equation}
        \tau=\frac{\eta}{\rho}\psi^2+C\psi^4+o(\psi^4)\,,
    \end{equation}
where $C$ is some constant. The last formula yields
    \begin{equation}
        \lim\limits_{\psi\rightarrow 0}
        \left\{\frac{\psi}{\sin\psi}\frac{\sqrt{\tau}}{\psi}\right\}
        =
        \lim\limits_{\tau\rightarrow 0}
        \left\{\frac{\psi}{\sin\psi}\frac{\sqrt{\tau}}{\psi}\right\}
        =\lim\limits_{\tau\rightarrow 0}
        \left\{\frac{\sqrt{\tau}}{\sin\psi}\right\}
        =\sqrt{\frac{\eta}{\rho}}.
    \end{equation}

Therefore, the function $\sqrt{\tau}/\sin\psi$ is bounded for $\psi\in[0,\pi/2]$. Let us find the upper bound. First, let us show that it is a monotone function and therefore, it assumes its maximum value at $\psi=\pi/2$. Indeed, its derivative
    \begin{equation}
        \frac{d}{d\psi}\left( \frac{\sqrt{\tau}}{\sin\psi}  \right)
        =\frac{\tau^{-1/2}\tau'_\psi\sin\psi-2\sqrt{\tau}\cos\psi}{2\sin^2\psi}
    \end{equation}
remains positive as long as
    \begin{equation}
        \tau^{-1/2}\tau'_\psi\sin\psi> 2\sqrt{\tau}\cos\psi
        \quad\text{or}\quad
        2\eta\tan\psi\sin\psi>\tau(l+q),
    \end{equation}
since $\tau'_\psi=4\eta\sin\psi/(l+q)$. Note now that left hand side and right hand side expressions in the last inequality are equal to $0$, when $\psi=0$. For $\psi\in(0,\pi/2]$, the derivative of the left hand side expression is greater than the derivative of the right hand side expression, since
    \begin{equation}
        \frac{d}{d\psi}\{2\eta\tan\psi\sin\psi\}=2\eta\sin\psi\left(1+\frac{1}{\cos^2\psi} \right),
    \end{equation}

    \begin{equation}
        \frac{d}{d\psi}\{\tau(l+q)\}=\frac{4\eta\sin\psi}{l+q}(l+q)+\tau(l+q)'_\psi,
    \end{equation}
and $(l+q)'_{\psi}<0$ for $\psi\in(0,\pi/2]$. Therefore,
    \begin{equation}\label{UpperBoundForSqrt(lnOmega)/sinPsi}
        \max\limits_{\psi\in[0,\pi/2]}\left\{ \frac{\sqrt{\tau}}{\sin\psi} \right\}
        =\left.\frac{\sqrt{\tau}}{\sin\psi}\right|_{\psi=\pi/2}
        =\left(\ln\frac{\rho+\eta}{\rho-\eta}\right)^{1/2}
        =\sqrt{\frac{r}{\rho}}
    \end{equation}

Let us choose $b\geq 2/\pi$, which is equivalent to $1/b\leq \pi/2$. The next step is to split the last integral in \eqref{Varphi_Estimate_One_Dim} into two parts,
    \begin{equation}
        I=\int\limits_{\psi=0}^{\psi=\pi/2}\frac{e^{ib\tau}}{\sin\psi}d\tau
        =\int\limits_{\tau=0}^{\tau=1/b}\frac{e^{ib\tau}}{\sin\psi}d\tau
        + \int\limits_{\tau=1/b}^{\psi=\pi/2}\frac{e^{ib\tau}}{\sin\psi}d\tau
        =I_1+I_2\,,
    \end{equation}
where $I_1$ and $I_2$ are the second and the third integrals respectively in the last formula above. Let us estimate both of that integrals. The estimate in \eqref{UpperBoundForSqrt(lnOmega)/sinPsi} yields
    \begin{equation}
        |I_1|=\left|\int\limits_0^{1/b}e^{ib\tau}
        \frac{\sqrt{\tau}}{\sin\psi}\tau^{-1/2}d\tau\right|
        \leq \sqrt{\frac{r}{\rho}}\int_0^{1/b}\tau^{-1/2}d\tau
        =\sqrt{\frac{r}{\rho}}\frac{1}{\sqrt{b}}.
    \end{equation}
To estimate $|I_2|$, we need to apply integration by parts and then, proceed estimation using \eqref{UpperBoundForSqrt(lnOmega)/sinPsi}.
    \begin{equation}
        I_2=\frac{1}{ib}\int\limits_{\tau=1/b}^{\psi=\pi/2}(e^{ib\tau})'_{\tau}
        \frac{d\tau}{\sin\psi}
        =\frac{1}{ib}
        \left\{\left.\frac{e^{ib\tau}}{\sin\psi}\right|_{\tau=1/b}^{\psi=\pi/2}
        -\int\limits_{\tau=1/b}^{\psi=\pi/2}
        e^{ib\tau}\left(\frac{1}{\sin\psi}\right)^{'}_{\psi}d\psi\right\}.
    \end{equation}
The maximal value computed in \eqref{UpperBoundForSqrt(lnOmega)/sinPsi} yields
    \begin{equation}
        \left|\left.\frac{e^{ib\tau}}{\sin\psi}\right|_{\tau=1/b}^{\psi=\pi/2}\right|
        \leq 1+\frac{\sqrt{\tau}}{\sin\psi}
        \left.\frac{1}{\sqrt{\tau}}\right|_{\tau=1/b}
        \leq 1+ \sqrt{\frac{r}{\rho}}\sqrt{b},
    \end{equation}
and similarly,
    \begin{equation}
        \left|\int\limits_{\tau=1/b}^{\psi=\pi/2}
        e^{ib\tau}\left(\frac{1}{\sin\psi}\right)^{'}_{\psi}d\psi\right|
        \leq \left|\left.\frac{1}{\sin\psi}\right|_{\tau=1/b}^{\psi=\pi/2}\right|
        \leq \sqrt{\frac{r}{\rho}}\sqrt{b}-1.
    \end{equation}
Therefore,
    \begin{equation}
        |I_2|\leq \frac{2}{\sqrt{b}}\sqrt{\frac{r}{\rho}} \quad\text{and}\quad
        |I|\leq |I_1|+|I_2| \leq \frac{3}{\sqrt{b}}\sqrt{\frac{r}{\rho}}\,,
    \end{equation}
which remains true for $b\geq 2/\pi$. This estimate for $I$ combined with \eqref{Varphi_Estimate_One_Dim}, yields
    \begin{equation}
        |\varphi_\lambda(r)|\leq\frac{\sqrt{\rho^2-\eta^2}}{\eta\pi}|I|\leq
        \frac{\sqrt{\rho^2-\eta^2}}{\eta\pi}\sqrt{\frac{r}{\rho}}\frac{3}{\sqrt{b}}
        \quad\text{for}\quad b\geq\frac{2}{\pi}.
    \end{equation}
And now we are ready for the conclusion. For any value $\varphi_{\lambda_0}(r)=\varPhi_0\neq0$, we can choose a large enough $b$ to satisfy the following sequence of inequalities,
\begin{equation}
        |\varphi_\lambda(r)|\leq
        \frac{\sqrt{\rho^2-\eta^2}}{\eta\pi}\sqrt{\frac{r}{\rho}}\frac{3}{\sqrt{b}}
        \leq|\varPhi_0|,
    \end{equation}
where the first inequality will be satisfied for $b\geq2/\pi$. Thus, if we choose
    \begin{equation}
        b>\max\left\{\frac{2}{\pi},
        \frac{\rho^2-\eta^2}{\rho(\eta\pi)^2}\frac{9r}{\varPhi_0^2}\right\},
    \end{equation}
then the absolute value of any eigenfunction $\varphi_\lambda(r)$ will be strictly smaller than the given value $|\varPhi_0|$. Therefore, to satisfy the equation $\varphi_\lambda(r)=\varPhi_0$, we need
    \begin{equation}
        b\leq\max\left\{\frac{2}{\pi},
        \frac{\rho^2-\eta^2}{\rho(\eta\pi)^2}\frac{9r}{\varPhi_0^2}\right\}
        \quad\text{or}\quad
        \lambda\leq -\frac{\kappa k^2}{4}-\kappa \left\{\max\left\{\frac{2}{\pi}
        \frac{\rho^2-\eta^2}{\rho(\eta\pi)^2}\frac{9r}{\varPhi_0^2}\right\}
        \right\}^2.
    \end{equation}
This completes the proof of Theorem~\ref{Thm-UpperBoundEgenvalue}.
\end{proof}

\begin{theorem}[One Radius Theorem for a Bounded Eigenvalue]
     Let $\varphi_\lambda(r)$ be a real radial eigenfunction assuming value one at the origin with     $-\kappa k^2/4<\lambda\leq\Lambda=-\kappa(p^2+k^2/4)$, where $p$ is some positive number. Then, such an eigenvalue $\lambda$ can be uniquely restored if we know the value of this eigenfunction at some point $r\leq\pi\rho/p$.
\end{theorem}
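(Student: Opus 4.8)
The plan is to reduce the claim to a monotonicity statement for $\varphi_\lambda(r)$ viewed as a function of the single parameter $b$. First I would translate the hypotheses on $\lambda$ into hypotheses on $b$. By Remark~\ref{Rmk-Lambda/Real/Conditoin} and Theorem~\ref{Thm-Vanishing-RE-Final}, the condition $\lambda>-\kappa k^2/4$ forces $\alpha=k/2+ib$ with $b=\sqrt{\lambda/(-\kappa)-k^2/4}>0$, so that $\lambda=-\kappa(k^2/4+b^2)$. The upper bound $\lambda\le\Lambda=-\kappa(p^2+k^2/4)$ is then equivalent to $0<b\le p$, and since $b\mapsto\lambda$ is a strictly increasing bijection of $(0,p]$ onto $(-\kappa k^2/4,\Lambda]$, it suffices to show that $\varphi_\lambda(r)$ is an injective function of $b$ on $(0,p]$.

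For the injectivity I would use the representation from~\eqref{Vanishing-RE-General},
$$\varphi_\lambda(r)=\frac{1}{|S^k(\rho)|}\int_{S^k(\rho)}\omega^{k/2}\cos(b\ln\omega)\,dS_u,$$
and differentiate under the integral sign with respect to $b$, obtaining
$$\frac{\partial\varphi_\lambda(r)}{\partial b}=-\frac{1}{|S^k(\rho)|}\int_{S^k(\rho)}\omega^{k/2}\,(\ln\omega)\sin(b\ln\omega)\,dS_u.$$
The key observation is that, writing $s=\ln\omega$, the factor $s\sin(bs)$ is even in $s$ and satisfies $s\sin(bs)\ge0$ whenever $|bs|\le\pi$. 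I would then control the range of $s$: as $u$ runs over $S^k(\rho)$ the distance $|m-u|$ runs over $[\rho-\eta,\rho+\eta]$, so $\omega=(\rho^2-\eta^2)/|m-u|^2$ runs over $[(\rho-\eta)/(\rho+\eta),(\rho+\eta)/(\rho-\eta)]$ and hence $|\ln\omega|\le\ln\frac{\rho+\eta}{\rho-\eta}=r/\rho$.

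Combining these facts, the hypotheses $b\le p$ and $r\le\pi\rho/p$ give $|b\ln\omega|\le b\,r/\rho\le p\cdot\pi\rho/(p\rho)=\pi$ on the whole sphere, so the integrand $\omega^{k/2}(\ln\omega)\sin(b\ln\omega)$ is nonnegative everywhere and strictly positive on the set where $0<|b\ln\omega|<\pi$, which has positive measure because the push-forward of the sphere measure under $u\mapsto\ln\omega$ is non-atomic with full support on $[-r/\rho,r/\rho]$. Therefore $\partial\varphi_\lambda(r)/\partial b<0$ for every $b\in(0,p]$, so $\varphi_\lambda(r)$ is strictly decreasing, hence injective, in $b$ on $(0,p]$; knowing its value pins down $b$ and thus $\lambda$ uniquely. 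The main obstacle I anticipate is purely the sign bookkeeping: one must verify that the constraint $r\le\pi\rho/p$ is exactly what keeps $|b\ln\omega|$ below $\pi$, so that $\sin(b\ln\omega)$ never changes sign against $\ln\omega$, and one should confirm the positive-measure claim so that the monotonicity is strict rather than merely weak. The case $k=2$ serves as a useful sanity check, since there $\varphi_\lambda(r)=\frac{\rho^2-\eta^2}{2\eta\rho b}\sin(br/\rho)$ reduces the statement to the strict monotonicity of $\mathrm{sinc}$ on $(0,\pi]$.
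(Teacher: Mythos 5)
Your proposal is correct and follows essentially the same route as the paper: both differentiate the representation $\varphi_\lambda=\frac{1}{|S^k(\rho)|}\int\omega^{k/2}\cos(b\ln\omega)\,dS_u$ under the integral sign with respect to $b$ and use the bounds $b\le p$, $|\ln\omega|\le r/\rho\le\pi/p$ to keep $|b\ln\omega|\le\pi$, forcing $(\ln\omega)\sin(b\ln\omega)\ge0$ and hence strict monotonicity in $b$. The only cosmetic difference is that the paper works with the reduced one-dimensional integral over $\psi\in[0,\pi/2]$ from~\eqref{Vanishing-RE-Final}, where $\ln(l/q)\le0$ throughout, whereas you stay on the full sphere and invoke the evenness of $s\sin(bs)$; your added remark on strict positivity of the derivative via a positive-measure set is a small but welcome tightening of the paper's assertion.
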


\begin{proof}
    Note that to proof the theorem it is enough to show that $\varphi_\lambda(r)$ is a strictly decreasing function with respect to the real eigenvalue $\lambda$ for every fixed $r\leq\pi\rho/p$.
   Recall that for a radial eigenfunction for $\lambda>-\kappa k^2/4$ can be represented by the integral formula in~\eqref{Vanishing-RE-Final}. Differentiation with respect to $b$ under the integral sign yields
    \begin{equation}\label{Deriva-of-VanishingEigenfcnIntegral}
        \frac{d\varphi_\lambda(r)}{db}
        =\frac{4\rho(\rho^2-\eta^2)^{k/2}\sigma_{k-1}}{|S(\rho)|}
        \int\limits_0^{\pi/2}
        \frac{\sin^{k-1}\psi}{l+q}\ln\frac{l}{q}\left(-\sin\left(b\ln\frac{l}{q}\right)\right)d\psi\,,
    \end{equation}
Note now that the lemma's conditions imply $b\leq p$, $r\leq\pi\rho/p$ and then $br/\rho\leq\pi$. Therefore,
    \begin{equation}
        -\pi\leq-\frac{br}{\rho}
        =-b\ln\frac{\rho+\eta}{\rho-\eta}
        =b\ln\frac{\rho-\eta}{\rho+\eta}
        \leq b\ln\frac{l}{q}\leq0.
    \end{equation}
Thus for $\psi\in(0,\pi/2)$, we have
     \begin{equation}
        -\sin b\left( \ln\frac{l}{q} \right) >0,
        \quad\text{while}\quad
        \ln\frac{l}{q}<0.
    \end{equation}
This implies
    \begin{equation}
        \frac{d\varphi_\lambda(r)}{db}<0
        \quad\text{and}\quad
        \frac{d\varphi_\lambda(r)}{d\lambda}
        =\frac{d\varphi_\lambda(r)}{db}\frac{db}{d\lambda}<0,
    \end{equation}
since
    \begin{equation}
        b=\sqrt{\frac{\lambda}{-\kappa}-\frac{k^2}{4}}
        \quad\text{and}\quad
        \frac{db}{d\lambda}
        =-\frac{1}{2\kappa}\left( \frac{\lambda}{-\kappa}-\frac{k^2}{4} \right)^{-1/2}>0.
    \end{equation}
This completes the proof of the theorem.
\end{proof}
Recall that by corollary~\ref{Cor-Separator-Lambdas}, the inequality in~\eqref{BoundaryFcn} yields the lower bound for the eigenvalue of a radial eigenfunction assuming the value one at the origin and failing the inequality in~\eqref{BoundaryFcn}. This lower bound is $-\kappa k^2/4$. On the other hand, according to theorem~\ref{Thm-UpperBoundEgenvalue}, the upper bound for such an eigenvalue can be computed using any non zero value of $\varphi_\lambda(r)$ at any point $r>0$. Therefore, in any case, to restore the eigenvalue, we need at most two non-zero values of the given radial eigenfunction.
This observation can be summarized by following theorem.

\begin{theorem}[Two Radii Theorem for the eigenvalue of a vanishing real eigenfunction]\label{TwoRad_SmallValues}
     Let $\varphi_\lambda(r)$ be a real radial eigenfunction assuming value $1$ at the origin, but fails the inequality in formula~\ref{BoundaryFcn} at least for one point $r_1$, or equivalently, this function assumes zero for some finite point. It is clear that the radial eigenfunction is not identically zero and we assume $\varphi_\lambda(r_1)\neq 0$ at some point $r_1$. Then, to restore the corresponding eigenvalue uniquely, it is enough to compute an additional value of $\varphi_\lambda(r_0)$ at a point $r_0\in(0,\pi\rho/p]$, where $p=p(\kappa, k, r_1, |\varphi_\lambda(r_1)|)$ is the upper bound for $b$ computed in theorem~\ref{Thm-UpperBoundEgenvalue}.
\end{theorem}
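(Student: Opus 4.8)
The plan is to assemble the statement from three ingredients already in hand: Corollary~\ref{Cor-Separator-Lambdas}, Theorem~\ref{Thm-UpperBoundEgenvalue}, and the One Radius Theorem for a Bounded Eigenvalue. The governing idea is to trap the unknown real eigenvalue $\lambda$ inside a bounded interval on which the map $\lambda\mapsto\varphi_\lambda(r_0)$ is injective, and then read off $\lambda$ from a single additional value. No new estimate is required; the work is in verifying that the outputs of the earlier theorems fit together as hypotheses.

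First I would fix the two-sided localization of $\lambda$. Since $\varphi_\lambda$ fails the inequality in~\eqref{BoundaryFcn} at the point $r_1$, Corollary~\ref{Cor-Separator-Lambdas} applies and forces $\lambda>-\kappa k^2/4$, equivalently $\alpha=k/2+ib$ with $b\neq0$; this is precisely the vanishing regime of Theorem~\ref{Thm-Vanishing-RE-Final}, and it supplies the lower bound. For the upper bound I would use the given value $\varphi_\lambda(r_1)\neq0$: the hypotheses of Theorem~\ref{Thm-UpperBoundEgenvalue} are met at $r_1$, and feeding the data $(\kappa,k,r_1,|\varphi_\lambda(r_1)|)$ into that theorem produces an explicit number $p$ with $|b|\leq p$, equivalently $\lambda\leq\Lambda=-\kappa(p^2+k^2/4)$. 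Combining the two steps confines the eigenvalue to the bounded half-open interval $(-\kappa k^2/4,\,\Lambda]$.

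Finally I would invoke the One Radius Theorem for a Bounded Eigenvalue. Its hypothesis is exactly that $-\kappa k^2/4<\lambda\leq\Lambda=-\kappa(p^2+k^2/4)$, which the previous step has verified, and its conclusion is that $\varphi_\lambda(r_0)$ is strictly monotone in $\lambda$ whenever $r_0\leq\pi\rho/p$. Strict monotonicity makes $\lambda\mapsto\varphi_\lambda(r_0)$ injective on the whole confined interval, so the single additional value $\varphi_\lambda(r_0)$ determines $\lambda$ uniquely. Choosing any $r_0\in(0,\pi\rho/p]$ therefore restores the eigenvalue and completes the argument.

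The main obstacle I expect is bookkeeping rather than a genuinely new inequality: one must check that the $p$ delivered by Theorem~\ref{Thm-UpperBoundEgenvalue} is the \emph{same} $p$ that governs the admissible range $r_0\leq\pi\rho/p$ in the bounded-eigenvalue theorem, and that the strict monotonicity used there holds uniformly across the entire interval $(-\kappa k^2/4,\Lambda]$ and not merely near one endpoint. Since that monotonicity rests on the sign of $-\sin(b\ln(l/q))$ under the constraint $br_0/\rho\leq\pi$, I would explicitly confirm that the bound $b\leq p$ coming from $r_1$ is compatible with $br_0/\rho\leq\pi$ forced by $r_0\leq\pi\rho/p$, so that the two theorems dovetail with no gap in the range of $\lambda$.
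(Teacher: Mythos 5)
Your proposal is correct and follows essentially the same route as the paper, which assembles the result from exactly these three ingredients: Corollary~\ref{Cor-Separator-Lambdas} for the lower bound $\lambda>-\kappa k^2/4$, Theorem~\ref{Thm-UpperBoundEgenvalue} applied at $r_1$ for the upper bound $b\leq p$, and the One Radius Theorem for a Bounded Eigenvalue for strict monotonicity of $\lambda\mapsto\varphi_\lambda(r_0)$ on $r_0\leq\pi\rho/p$. The compatibility check you flag ($b\leq p$ together with $r_0\leq\pi\rho/p$ giving $br_0/\rho\leq\pi$) is precisely the condition under which that monotonicity argument runs, so there is no gap.
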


\bibliographystyle{IEEEtran}

\begin{thebibliography}{99}
\addcontentsline{toc}{chapter}{\numberline{}Bibliography }

\bibitem{Flatto} L.Flatto, \textbf{The Converse of Gauss’s Theorem for Harmonic Functions}, Journal of Differential Equations, vol. 1, issue 4, pp. 483-490, 1965.
\bibitem{Delsarte} J. Delsarte, \textbf{Lectures on Topics in Mean Periodic Functions And The Two-Radius Theorem}, Tata Institute of Fundamental Research, Bombay, 1961.
\bibitem{Koranyi} A. Kor\'anyi, \textbf{On Mean Value Property for Hyperbolic Spaces}, Contemporary Mathematics, vol. 191, pp. 107-116, 1995.
\bibitem{Schwarz} H.A. Schwarz, \textbf{Gesammelte Mathematische Abhandlungen}, Chelsea Publishing Company Bronx, New York, 1972, (pp. 359-361).
\bibitem{Ahlfors} Lars V. Ahlfors, \textbf{Complex Analysis}, McGraw-Hill, Inc., 1966, (pp. 168-169).
\bibitem{Reinhold} Reinhold Remmert, \textbf{Classical Topics in Complex Function Theory}, Springer-Verlag New York, Inc., 1998.
\bibitem{Chavel} Isaac Chavel, \textbf{Eigenvalues in Riemannian Geometry}, Academic Press, 1984.
\bibitem{Olver} W.J.Olver, \textbf{Asymptotics and Special Functions}, Academic Press, Inc., New York and London, 1974.
\bibitem{Hartman} Philip Hartman, \textbf{Ordinary Differential Equations}, John Wiley and Sons, Inc., 1964.
\bibitem{Simmons} George F. Simmons, \textbf{Differential Equations with Applications and Historical Notes}, McGraw-Hill, Inc., 1972.
\bibitem{Artamoshin} S. Artamoshin, \emph{Lower bounds for the first Dirichlet eigenvalue of the Laplacian for domains in hyperbolic space}, \textbf{Mathematical Proceedings of the Cambridge Philosophical Society}, vol. 160, issue 02, pp. 191-208, 2016.

\end{thebibliography}

}
\end{spacing}

\end{document}